\theoremstyle{plain}
\newtheorem{thm}{Theorem}[section]
\newtheorem{cor}[thm]{Corollary}
\newtheorem{lem}[thm]{Lemma}
\newtheorem{prop}[thm]{Proposition}
\newenvironment{myprop}[1]
{\innercustomthm}
{\endinnercustomthm}
\theoremstyle{definition}
\newtheorem{definition}[thm]{Definition}
\newtheorem{ex}[thm]{Example}
\theoremstyle{remark}
\newtheorem{remark}[thm]{Remark}
\newtheorem{example}[thm]{Example}
\DeclareMathOperator{\supp}{supp}
\newcommand{\g}{\mathbf{g}}
\newcommand{\R}{\mathbb R}
\newcommand{\N}{\mathbb N}
\newcommand{\DD}{\mathcal D}
\newcommand{\II}{\mathcal I}
\newcommand{\JJ}{\mathcal J}
\newcommand{\MM}{\mathcal M}
\def\d{{\rm d}}
\newcommand{\lev}{\text{lev}}
\newcommand{\leb}{\text{Leb}}
\begin{document}

\title[Unique ergodicity...]{Unique ergodicity for random noninvertible maps on an interval}

\author{Sara Brofferio}
\address{
Univ Paris Est Creteil, Univ Gustave Eiffel, CNRS, LAMA UMR8050, F-94010 Creteil, France
(SB)}
\email{sara.brofferio@u-pec.fr}
\author{Hanna Oppelmayer}
\address{Universit\"at Innsbruck, Technikerstrasse 13, A--6020 Insbruck, Austria (HO)}
\email{Hanna.Oppelmayer@uibk.ac.at}
\author{Tomasz Szarek}
\address{Institut of Mathematics Polish Academy of Sciences, Abrahama 18, Sopot, Poland (TS)}
\email{tszarek@impan.pl}

\thanks{T.S. was supported by the Polish NCN Grant 2022/45/B/ST1/00135.\\ H.O. was partially supported by Early Stage Funding- Vicerector for research, Universität Innsbruck}

\date\today

\maketitle

\begin{abstract}  
In this short note, we investigate non-invertible stochastic dynamical systems on the unit interval $[0,1]$. We  provide a handy condition for  unique ergodicity for systems that are injective in mean.  
 On the other hand, we give concrete examples where unique ergodicity fails.
 \end{abstract}
 
\tableofcontents

\section{Introduction}
Ergodicity is the key concept in the theory of dynamical systems. It comes from statistical physics but also captures some nice properties of stochastic processes.
This note is concerned with the ergodic properties of Markov chains corresponding to random iteration of maps on the interval $[0, 1]$. 

Let $\MM$ be a set of piecewise monotone (with finitely many pieces) continuous functions on $[0, 1]$.
Let $\mu$ be a Borel probability measure on the space $C([0,1])$ (equipped with the topology of uniform convergence) such that  $\mu(\MM)=1$. The probability $\mu$ induces by recursion a Markov chain on $[0, 1]$, called a {\it stochastic dynamical system} (SDS),  and given   by the formula:
\begin{equation*}%\label{e1_02.03.20}
X_n^x:= \g_n(X_{n-1}^x)\quad \mbox{for $n\ge 1$} \mbox{ and $X_0^x=x$.}
\end{equation*}
Here $(\g_n)_{n\in\N}$  is a sequence of independent  $\MM$--valued random variables with distribution $\mu$.
The transition probability of this Markov chain is given by the formula:
$$
\pi(x, A)=\int_{\MM} {\bf 1}_A(g(x))\, \d\mu(g)\qquad \mbox{ for $x\in\R$ and $A\in {\mathcal B}([0, 1])$.}
$$
A Borel probability measure $\nu$ on $[0,1]$ is called \textit{$\mu$--invariant} (also known as \textit{$\mu$--stationary}) if 
$$
\nu(A)=\int_{\MM} \nu(g^{-1}A)\, \d\mu(g), \qquad \text{ for every $A\in\mathcal{B}([0,1])$}.
$$
Observe that due to the compactness of $[0,1]$ and the continuity of the functions in $\MM$ there always exists at least one $\mu$--invariant probability measure on the closed interval $[0,1]$.
Our aim is to formulate sufficient conditions 
for the existence of a unique $\mu$--invariant  probability measure. We will also provide examples where uniqueness fails.

The problem of unique ergodicity for random dynamical systems has been intensively studied recently but mainly in the case when $\mu$ is supported on some subgroup of homeomorphisms (see \cite{AM, CS, DKN, GH, M, N}). The paper by A. Homburg {\it et al.} (see \cite{HKRVZ}) is in fact an exception. Namely, the authors prove unique ergodicity for some systems including logistic maps using the criterion for the existence of an invariant measure absolutely continuous with respect to Lebesgue measure derived
for unimodal maps with negative Schwarzian derivative (see \cite{NvS}).  Further, under the assumption of some average contractivity, Kloeckner \cite{K} proved unique ergodicity for quite general iterated function systems defined on complete metric spaces. Similar results on this topic were obtained by Czapla in \cite{C}.

The paper is aimed at proving unique ergodicity for SDSs that are generated by non--injective maps, but such that a generic point $x\in [0,1]$ has, on average, not more than one preimage. We call this property \textit{$\mu$-injectivity}. For such systems we can generalize some techniques developed for partially hyperbolic diffeomorphisms by Avila and Viana (see \cite{AV}) and extended to random homeomorphisms on the circle by Malicet (see \cite{M}).

\begin{definition} Let $\mu$ be a probability measure on  $C([0,1])$ with $\supp(\mu)=\MM$. We say that a Borel probability measure $\nu$ on $[0,1]$ is \textit{$\mu$--injective in $x\in[0,1]$} if 
$$\int_{\MM} n(g,x)\, d\mu(g)\leq 1,$$
where $n(g,x)$ denotes the cardinality of the preimage $g^{-1}(\{x\})$. 
\end{definition}
Furthermore, we say that a stochastic system $(\MM,\mu)$  \textit{contracts a neighbourhood of $x_0\in[0,1]$ } if there exists $\varepsilon>0$ such that
	\begin{align*}
	\mu(\{g\in\MM\, : \, 
  g(x_0)=x_0, \,  |g(x)-g(x_0)|<|x-x_0|, \, \forall x\in B(x_0,\epsilon)\})>0.
\end{align*}
where $B(x_0,\epsilon):= [x_0-\epsilon, x_0+\epsilon]\cap[0,1]$.

The two above conditions imply uniqueness as follows.
\begin{thm}\label{intro thm: mu inj implies pos h}
	Let $(\MM,\mu)$ be a stochastic dynamical system $\mu$--injective in all but countably many $x\in[0,1]$ and assume  that it contracts a neighbourhood of $x_0\in[0,1]$.	Let $\nu$ be an atomless, ergodic  $\mu$-invariant probability measure with $x_0\in\supp(\nu)$. Then any other $\mu$-invariant, ergodic Borel probability measure whose support contains $\supp(\nu)$ coincides with $\nu$. 
\end{thm}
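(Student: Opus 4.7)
The strategy I would follow is to adapt the synchronization paradigm of Avila--Viana \cite{AV} and Malicet \cite{M} to the non-invertible setting. Let $\Omega = \MM^{\N}$ with $\mu^{\otimes \N}$, and denote the backward random composition by $h_n^\omega := \g_1 \circ \g_2 \circ \cdots \circ \g_n$. My plan is to show that the random pushforward measures $\rho_n^\omega := (h_n^\omega)_* \nu$ converge almost surely weak-$*$ to a random Dirac mass $\delta_{y(\omega)}$ whose location depends only on $\omega$ and a fixed anchor in $\supp(\nu)\cap\supp(\nu')$; the same analysis for $\nu'$ produces the \emph{same} random Dirac limit, which forces $\nu = \nu'$.

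\emph{Martingale step.} A direct computation using invariance of $\nu$ shows that, for every $f \in C([0,1])$, the process $M_n^f(\omega) := \int f \circ h_n^\omega \, \d\nu$ is a bounded martingale for $\FF_n = \sigma(\g_1, \dots, \g_n)$: conditional on $\FF_{n-1}$ the expectation becomes $\int \int f(h_{n-1}^\omega(g(x))) \, \d\mu(g) \, \d\nu(x)$, which collapses to $M_{n-1}^f$ by $\mu$-invariance. Doob's theorem together with the separability of $C([0,1])$ yields almost sure weak-$*$ convergence $\rho_n^\omega \to \rho_\infty^\omega$ with $\E \rho_\infty^\omega = \nu$; the same applies to $\nu'$.

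\emph{Synchronization step.} I would then prove $\rho_\infty^\omega = \delta_{y(\omega)}$ almost surely, with $y(\omega) = \lim_n h_n^\omega(x_0)$. Ergodicity of $\nu$ and $x_0 \in \supp(\nu)$ imply that the Markov chain visits $B(x_0, \varepsilon)$ with positive frequency, and independence of the $\g_k$ combined with the contraction hypothesis produces infinitely many random times at which $h_n^\omega$ strictly shrinks the relevant portion of the iterated support. To upgrade this diameter-shrinkage on $\supp(\nu)$ to a bona fide Dirac limit of $\rho_\infty^\omega$, one uses the $\mu$-injectivity hypothesis (which holds $\nu$-a.e., since $\nu$ is atomless and the exceptional set is countable): the averaged bound $\int n(g,x) \, \d\mu(g) \leq 1$ forces the non-atomic mass of the random measure $\rho_n^\omega$ to be non-increasing under the martingale averaging, so it must vanish in the limit, leaving $\rho_\infty^\omega$ purely atomic. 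Combining with the diameter shrinkage gives $\rho_\infty^\omega = \delta_{y(\omega)}$. Running the same argument for $\nu'$, which inherits $x_0 \in \supp(\nu')$ and the same system-level hypotheses, yields $(h_n^\omega)_* \nu' \to \delta_{y'(\omega)}$ with $y'(\omega) = \lim_n h_n^\omega(x_0) = y(\omega)$. Taking expectations,
$$\nu \;=\; \E[\delta_{y(\omega)}] \;=\; \E[\delta_{y'(\omega)}] \;=\; \nu'.$$

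\emph{Main obstacle.} The delicate step is the atomicity of $\rho_\infty^\omega$: in the homeomorphism case of \cite{M} it is automatic, because pushforwards by bijections preserve atomlessness, but here the maps in $\MM$ may collapse intervals and atoms can appear dynamically in $\rho_n^\omega$ along the iteration. Extracting the precise monotonicity of the non-atomic mass from the averaged injectivity $\int n(g,x) \, \d\mu(g) \leq 1$, and coupling it with the local contraction to pinpoint the limit, is where the main technical effort lies. A secondary subtlety is that when $\nu'$ is potentially atomic, the countable set on which $\mu$-injectivity may fail could in principle intersect $\supp(\nu')$ at some atom; one has to argue, using ergodicity of $\nu'$ and the fact that $\supp(\nu')\supseteq\supp(\nu)$ contains the perfect set $\supp(\nu)$, that this does not obstruct the synchronization step applied to $\nu'$.
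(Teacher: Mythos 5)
Your martingale step is fine: $M_n^f=\int f\circ g_1\circ\cdots\circ g_n\,\d\nu$ is a bounded martingale by $\mu$-invariance, and this works verbatim for non-invertible maps. The genuine gap is the synchronization step. First, your mechanism for atomicity of $\rho_\infty^\omega$ does not hold up: the hypothesis $\int n(g,x)\,\d\mu(g)\le 1$ controls the average number of \emph{preimages}, i.e.\ the pushbackward set function $g^{-1}\nu$, and has no direct bearing on the atomic part of the pushforward limit. (Since the maps in $\MM$ are piecewise \emph{strictly} monotone, point preimages are finite and each $\rho_n^\omega$ is in fact atomless; in any case ``non-increasing non-atomic mass'' is both unproved and, even if granted, would not imply that it vanishes --- Example \ref{ex: mu inj} is a $\mu$-injective system whose backward images of Lebesgue measure never concentrate.) Second, the contraction hypothesis is purely local: with positive probability some map fixes $x_0$ and satisfies $|g(x)-x_0|<|x-x_0|$ on one neighbourhood of $x_0$. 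This says nothing about shrinking the image of all of $\supp\nu$ under backward compositions, so your ``infinitely many random times at which $h_n^\omega$ strictly shrinks the relevant portion of the iterated support'' is not derivable from the stated hypotheses without substantial additional input.

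That missing input is exactly what the paper supplies, by a different route. It defines a generalized Radon--Nikodym derivative $\d_\nu g$ for $g^{-1}\nu$ and the associated Furstenberg entropy $h_\mu(\nu)$, and then argues: (i) $\mu$-injectivity plus Jensen's inequality gives $h_\mu(\nu)\ge 0$, with equality only if $\d_\nu g\equiv 1$ (Proposition \ref{prop: h pos}); (ii) the local contraction at $x_0\in\supp\nu$ rules out $\d_\nu g\equiv 1$, because that identity forces $\nu(A)\le\nu(g(A))$, and iterating along the shrinking intervals $g^n(J_m)\to\{x_0\}$ would give $\nu(J_m)=0$, contradicting $x_0\in\supp\nu$ (Lemma \ref{lem: gnu}); (iii) positive entropy yields, via Birkhoff's theorem applied to $\ln J_\varepsilon$, exponential contraction of \emph{forward} compositions $g_n\circ\cdots\circ g_1$ on a small interval around $\nu$-a.e.\ point (Proposition \ref{contractivity}); (iv) comparing Birkhoff averages of a Lipschitz function for $\nu$ and $\eta$ along such a contracted interval forces $\nu=\eta$ (Theorem \ref{thm: pos h}). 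To salvage your plan you would still need an equivalent of (i)--(iii) before any synchronization of backward images can be extracted; as written, the proposal has no working quantitative contraction mechanism.
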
 While the above theorem does not concern the existence of such measures, we provide conditions for the existence of a $\mu$-invariant probability measure on $(0,1)$ in Corollary \ref{cor-pract-cond} in Section~\ref{sec: 4}.\\

The paper is organized as follows.
We start by showing in Section~\ref{sec: 2} that for stochastic dynamical systems defined by piecewise monotone functions, it is possible to extend the notion of Furstenberg entropy $h_\mu(\nu)$ of an invariant probability $\nu$ based on Radon-Nikodym derivatives and a pushforward functional $g^{-1}\nu$. As in the invertible case, if the measure $\nu$ is atomless and the entropy is strictly positive, then the SDS is locally contracting (Proposition \ref{contractivity}) and ergodic measures are explicitely determined by their support (Theorem \ref{thm: pos h}). 
The proof of the contractivity (Proposition \ref{contractivity}) is an adaption of a proof by Malicet \cite{M} and Avila-Viana \cite{AV}, provided in the appendix. To adapt the proof we use a result about generalized Radon-Nikodym derivatives (Proposition~\ref{prop-RN-deri-NC-int}), which we also prove in the appendix.

In Section~\ref{sec: 3}, we prove that for $\mu$--injective systems the entropy is nonnegative and give a handy condition ensuring that it is strictly positive (Theorem \ref{thm: mu inj implies pos h}). Together with Theorem~\ref{thm: pos h}, this proves the above theorem.

In Section~\ref{sec: 4}, we provide explicit examples of noninvertible $\mu$--injective SDSs that admit a unique invariant probability measure (Corollary \ref{cor-pract-cond}).

Finally, in the last part  of this note, we present an example of a random system without $\mu$--injectivity that possesses different invariant ergodic measures.

\section{Generalized Entropy  for piecewise monotone SDS}\label{sec: 2}

Let $\MM=\MM([0,1])$ be the set of piecewise strictly monotone continuous functions from $[0,1]$ to $[0,1]$ with finitely many pieces. For a function $g \in \MM$, we denote by $\II(g):=\{I_i^g\}_{i=1}^{N(g)} =\{[d_{i-1}^g,d_{i}^g]\}_{i=1}^{N(g)}$ a finite collection of closed intervals with disjoint interiors that cover $[0,1]$ and such that $g$ is monotone on each $I_i^g$. Observe that for every $I_I^g\in \II(g)$ there exists a homeomorphisms $\gamma_i^g$ on $[0, 1]$ such that $g(x)=\gamma_i^g(x)$ for all $x\in I_i^g$.
Let $\DD(g)$ denote  the set of all points where $g$ changes slope, i.e. $\DD(g)=\bigcup_{I\in\II(g)}\partial I=\{d_0^g,d_1^g, \ldots , d_{N(g)}^g\}$.

\subsection{Pushforward and pushbackward measures}

Let $\nu$ be a Borel probability measure on $[0,1]$, and let $g\in \MM$.
\medskip
We define the \textit{pushforward measure} $g\nu$ as
$$
g\nu(A):=\nu(g^{-1}(A)) \quad \text{for } A\in\mathcal{B}([0,1]),
$$ 
where $g^{-1}(A)$ denotes the preimage of $A$ and $\mathcal{B}([0,1])$ denotes the Borel $\sigma$-algebra on $[0,1]$.

Note that if $g\in \MM$ the images $g(A):=\{g(x)\, : \, x\in A\}$, $A\in\mathcal{B}([0,1])$, are Borel measurable and we can define a \textit{pushbackward set function} $g^{-1}\nu$ on Borel sets as
$$
g^{-1}\nu(A):=\nu(g(A))   \quad \text{for } A\in\mathcal{B}([0,1]).
$$
In general, $g^{-1}\nu$ is not a measure, since it is not $\sigma$--additive (if $A_1$ and $A_2$ are two disjoint sets such that $g(A_1)=g(A_2)$ then $g^{-1}\nu(A_1\cup A_2)=g^{-1}\nu(A_1)=g^{-1}\nu(A_2)$).
However, if $g$ coincides with the homeomorphism $\gamma^g_i$ on an interval $I_i\in \II(g) $, then $g^{-1}\nu$ locally coincides with the measure   $\nu^g_i(A):= (\gamma^g_i)^{-1}\nu(A\cap I_i^g)$ (the restriction of $(\gamma^g_i)^{-1}\nu$ to $I_i^g$) in the sense that 
\begin{equation}\label{eq-loc-def-nu-g}
	g^{-1}\nu(A)=(\gamma^g_i)^{-1}\nu(A)=\nu^g_i(A) \quad \text{for}\quad A\subseteq I_i^g.
\end{equation}
Furthermore,  $g^{-1}\nu$ is bounded above by the measure $\overline{\nu}^g$  obtained as the sum of the measures $\nu^g_i$ 
\begin{equation}\label{eq-sup-nu-g}
	g^{-1}\nu(A)\leq \sum_{i=1}^{N(g)} \nu_i^g(A)=:\overline{\nu}^g(A) \quad \text{for all } A\in \mathcal{B}([0,1]).
\end{equation}

%We sometimes denote $ g\nu=\nu\circ g^{-1}$ and $g^{-1}\nu=\nu\circ g$.\todo{needed?}

\subsection{Radon-Nikodym derivative for finite measures on $[0,1]$}

Let $\lambda$ and $\nu$ be two probability measures on the interval $[0,1]$. Then, the Radon-Nikodym derivative of $d\lambda/d\nu$ is defined as
$$\frac{\d\lambda}{\d\nu}(x):=\lim_{r\to 0}\frac{\lambda([x-r,x+r])}{\nu([x-r,x+r])} \quad \text{for $\nu$--almost every $x\in[0,1]$}.$$
The limit exists $\nu$-almost everywhere, and the Lebesgue decomposition of $\lambda$ is given by $\d\lambda=\frac{\d\lambda}{\d\nu}\d\nu+\d\lambda_{\text{sing}}$ with $\lambda_{\text{sing}}\perp\nu$. In particular,
$$
\int_A \frac{\d\lambda}{\d\nu}(x)\mathrm{d}\nu(x)\leq \lambda(A) \quad \text{for all Borel sets} \ A\subseteq [0,1],
$$
with equality if and only if $\lambda \ll \nu$ (see, for instance, \cite[Chapter 2]{Mat}).

For any $x\in (0,1)$ we denote  
\begin{equation}\label{e1_12.03.24}
\JJ^x:=\{I: \text{ $I\subset [0, 1]$ is a closed interval and}\,\,  x\in \mathrm{int}(I)\}.
\end{equation}
In the sequel, we will need the following result concerning Radon-Nikodym derivatives:
\begin{prop}\label{prop-RN-deri-NC-int}
	Let $\lambda$ be a finite Borel measure, and let $\nu$ be a Borel probability measure on the interval $[0,1]$. 
	\begin{enumerate}
		\item For $\nu$--almost every $x\in[0,1]$ we have 
		$$
		\frac{\d\lambda}{\d\nu}(x)=\lim_{\delta\to 0}\sup_{I\in \JJ^x,\, |I|<\delta}\frac{\lambda(I)}{\nu(I)} \quad .
		$$
		\item Let $Q^*_\nu(\lambda,x):=\sup_{I\in \JJ^x} \frac{\lambda(I)}{\nu(I)}$. Then 
		$$\int  \ln^+ Q^*_\nu(\lambda,x)\mathrm{d}\nu(x)\leq 2\lambda([0,1]).$$
	\end{enumerate}
\end{prop}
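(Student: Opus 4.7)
My plan is to treat part (1) as a non-centered Lebesgue differentiation theorem for the pair $(\lambda,\nu)$, and part (2) as a weak-type maximal inequality combined with the layer-cake formula. Write $f:=d\lambda/d\nu$, fix the Lebesgue decomposition $\lambda=f\cdot\nu+\lambda_{\mathrm{sing}}$ with $\lambda_{\mathrm{sing}}$ concentrated on a Borel $\nu$-null set $S$, and set $L:=\lambda([0,1])$.

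For (1), the inequality ``$\ge$'' is immediate: $[x-r,x+r]\in\JJ^x$ for every small $r>0$, so the supremum dominates the symmetric limit defining $f(x)$ for $\nu$-a.e.\ $x$. For the reverse inequality, set
$$\overline D(x):=\lim_{\delta\to 0}\sup_{I\in\JJ^x,\,|I|<\delta}\lambda(I)/\nu(I)$$
(the limit exists by monotonicity in $\delta$) and show $\nu(\{\overline D>f\})=0$. Fix rationals $c,\alpha>0$, put $E:=\{f<c,\ \overline D>c+\alpha\}$ and $E':=E\setminus S$, so that $\nu(E')=\nu(E)$ and $\lambda(E')=\int_{E'}f\,d\nu\le c\,\nu(E')$. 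Given $\varepsilon>0$, outer regularity of $\lambda$ yields an open $U\supseteq E'$ with $\lambda(U)<\lambda(E')+\varepsilon$. For each $x\in E'$ the family $\{I\in\JJ^x : I\subset U,\ \lambda(I)>(c+\alpha)\nu(I),\ |I|<\delta\}$ is nonempty for every small $\delta$, hence a fine Vitali cover of $E'$. The Vitali covering theorem on $\R$ extracts a pairwise disjoint countable subfamily $\{I_k\}\subset U$ with $\nu(E'\setminus\bigcup_k I_k)=0$, and summing gives
$$(c+\alpha)\,\nu(E')\le(c+\alpha)\sum_k\nu(I_k)\le\sum_k\lambda(I_k)\le\lambda(U)<c\,\nu(E')+\varepsilon,$$
whence $\alpha\,\nu(E')\le\varepsilon$; letting $\varepsilon\to0$ shows $\nu(E)=0$, and a countable union over rational $c,\alpha$ completes (1).

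For (2) the crucial step is the weak-type bound
$$\nu\bigl(\{x:Q^*_\nu(\lambda,x)>t\}\bigr)\le \frac{2L}{t}\qquad(t>0),$$
obtained by a Besicovitch-type covering for intervals on $\R$: for each $x$ in the super-level set pick $I_x\in\JJ^x$ with $\lambda(I_x)>t\,\nu(I_x)$, and extract two countable disjoint subfamilies whose union still covers the set; on each subfamily disjointness gives $\sum_k\nu(I_k)\le\sum_k\lambda(I_k)/t\le L/t$, so the total is $\le 2L/t$. Combined with the trivial estimate $\nu(\{Q^*>t\})\le 1$ and the layer-cake formula,
$$\int\ln^+Q^*_\nu(\lambda,x)\,d\nu(x)=\int_0^\infty\nu(\{Q^*>e^s\})\,ds\le\int_0^\infty\min(1,2Le^{-s})\,ds,$$
and a direct evaluation of the right-hand side yields $2L$ when $2L\le 1$ and $1+\ln(2L)$ when $2L>1$; since the elementary inequality $1+\ln u\le u$ holds for all $u>0$, the bound $\le 2L$ holds in both cases.

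The main obstacle is justifying the covering step in (2) with the explicit constant $2$ for an arbitrary finite Borel measure $\nu$ on $\R$ (possibly atomic and non-doubling) and with intervals that only contain $x$ in the interior rather than centered at it. A clean workaround is to change variables via $x\mapsto F_\nu(x)$: on the atomless part of $\nu$ this reduces the inequality to the classical weak-$(1,1)$ bound for the non-centered Hardy--Littlewood maximal operator on $[0,1]$ against Lebesgue measure, while at any atom $x$ the trivial estimate $Q^*(x)\le L/\nu(\{x\})$ makes the atomic contribution easy to control separately.
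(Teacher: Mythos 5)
Your proposal is correct in substance, and its two halves relate to the paper's proof differently. For part (2) you follow essentially the same route as the paper: a multiplicity-two covering of the super-level set $\{Q^*_\nu(\lambda,\cdot)>t\}$ by intervals $I_x$ with $\lambda(I_x)>t\,\nu(I_x)$ yields the weak-type bound $\nu(\{Q^*>t\})\le 2\lambda([0,1])/t$, and the layer-cake integration then gives the constant $2$; the paper integrates $\int_1^\infty \nu(A_a)\,a^{-2}\,\mathrm{d}a$ directly rather than splitting with $\min(1,2Le^{-s})$, but the computations are equivalent. For part (1) you take a more elementary and self-contained path than the paper: you run the classical density-point argument on the sets $E_{c,\alpha}=\{f<c,\ \overline D>c+\alpha\}$ using the Lebesgue decomposition and outer regularity, whereas the paper instead verifies that non-centered closed intervals form a $\nu$--Vitali relation and then quotes Federer's differentiation theorem (\cite[Theorem 2.9.7]{Federer1996}). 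Your version is arguably more transparent and avoids the external machinery.

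The one step you should not treat as a black box is the covering theorem you invoke in part (1): you apply ``the Vitali covering theorem on $\R$'' to a fine cover of $E'$ by \emph{non-centered} closed intervals, measured against an arbitrary finite Borel measure $\nu$ that may be atomic and non-doubling. The classical Vitali theorem is for Lebesgue measure, and the standard Besicovitch--Vitali theorem for Radon measures is stated for closed balls \emph{centered} at points of the set; neither covers your situation verbatim. This is exactly the folklore statement the paper felt obliged to prove from scratch (its Lemma on Vitali covers, via the one-dimensional trick of refining a finite subcover so that each interval meets at most two others and keeping the better of the odd- and even-indexed subfamilies, then iterating to exhaust the $\nu$-measure). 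The statement is true, so your argument does not fail, but a complete write-up must either supply that lemma or cite a source for it; the same remark applies to the multiplicity-two extraction in part (2), which both you and the paper assert as ``Besicovitch in dimension one.'' Your closing suggestion of transporting everything by $F_\nu$ to reduce to the Lebesgue case is a workable alternative, but the treatment of atoms there is sketchier than the direct covering argument and is not actually needed.
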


This result is well-known in the case $\nu$ is the Lebesgue measure or for general measures when the supremum is taken over centered intervals (see \cite[Chapter 2]{Mat} or \cite[Proposition 5]{Led86}). The stated result (for general probability measures and non-centered intervals) seems to be a folklore theorem, but since we could not find a precise reference, we provide its proof in the appendix.

\subsection{Radon--Nikodym derivative of $g^{-1}\nu$}

Since the set function $g^{-1}\nu$ is locally a measure, we can define a \textit{generalized Radon--Nikodym derivative} for $\nu$-almost every $x$ in the interior of $I_i^g\in\II(g) $ by  
$$ 
\d_{\nu}g(x)=\frac{\d g^{-1}\nu}{\d\nu}(x):=\lim_{r\to 0}\frac{\nu(g([x-r,x+r]))}{\nu([x-r,x+r])} \quad \text{for $\nu$--a.e. } x\not\in \DD (g).
$$
This limit is almost everywhere well--defined since "locally" $g^{-1}\nu$ can be viewed as a measure, and  
$$ 
\d_{\nu}g(x)=\frac{\d\nu_i^g}{\d\nu}(x) \quad \text{for $\nu$--a.e. } x\in \mathrm{int}({I_i^g})=(d_{i-1}^g,d_i^g).$$

\subsection{Entropic criterion for characterizing ergodic measures}

Let $\mu$ be a Borel probability measure supported on $\MM$, and let $\nu$ be a Borel probability measure \textit{with no atoms}. Then, for $\mu$--almost every $g$, the derivative $\d_\nu g(x)$ is well-defined for $\nu$--almost every $x\in [0,1]$ (in fact, $\nu(\DD(g))=0$). 

If $\ln^+ \d_\nu g(x)$ is $\nu\times\mu$-integrable, we can define the \textit{(generalized) Furstenberg entropy} as
$$ 
h_{\mu}(\nu):=- \int_{\MM\times [0,1]} \ln (\d_\nu g(x))\  \d\mu(g) \d\nu(x)\in (-\infty,+\infty].
$$ 
Other then in the case when $\mu$ is supported on invertible functions, for general probability measures $\mu$ on $\MM$, the entropy can be negative. However, we will see in this section that if one can guarantee that the entropy is strictly positive, then the SDS has nice contraction properties, ensuring that, in some sense, ergodic invariant measures are determined by their support.

Recall that a $\mu$--invariant measure $\nu$ is called \textit{ergodic} if for every  subset $A\subset [0, 1]$ such that $\nu_A$, the restriction of $\nu$ to $A$, is $\mu$--invariant, we have either $\nu(A)= 0$ or $\nu([0, 1]\setminus A)=0$. For a more detailed survey on ergodic measures for SDS, we refer to the Appendix of \cite{BBS}.
%\footnote{  this is  equivalent to  $\nu(A\triangle g^{-1}A)=0$  for $\mu$-a.e $g$} 

Let 
$$
J(x,g):=\sup\left\{\frac{\nu(g (I))}{\nu(I)}\, : \, I\in\JJ^x \right\}\in[0,+\infty]
$$
with the convention $\frac{0}{0}=0$.  Observe that by definition $\d_\nu g(x)\leq J(x,g)$. Thus if $\ln^+J$ is $\nu\times\mu$--integrable then the entropy is well--defined and we have the following:

\begin{thm}[Entropic criterion for ergodic measures]\label{thm: pos h}
	Let $\mu$ be a probability measure on $\MM$. Let $\eta$ and $\nu$ be $\mu$--invariant, ergodic Borel probability measures on $[0,1]$. Suppose that $\nu$ is atomless and $\supp\nu\subseteq \supp\eta$. If $\ln^+J\in L^1([0,1]\times \MM, \nu\times \mu)$ and $h_{\mu}(\nu)>0$, then we have $\nu=\eta$.
\end{thm}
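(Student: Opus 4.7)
The plan is to convert the positive entropy hypothesis into a local synchronisation property via Proposition~\ref{contractivity}, then to couple $\nu$-typical and $\eta$-typical orbits under a common driving sequence and conclude through the Birkhoff ergodic theorem applied to each stationary measure separately.

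By Proposition~\ref{contractivity} (whose hypotheses $\nu$ atomless, $h_\mu(\nu)>0$, $\ln^+ J\in L^1(\nu\times\mu)$ are all in force), for $\mu^\N\otimes\nu$-a.e.\ $(\omega,x)$ there exists $\epsilon(\omega,x)>0$ such that
$$
\diam\bigl(g_n^\omega\circ\cdots\circ g_1^\omega\bigl(B(x,\epsilon(\omega,x))\bigr)\bigr)\xrightarrow[n\to\infty]{}0.
$$
Since $\supp\nu\subseteq\supp\eta$, for $\nu$-a.e.\ such $x$ we also have $\eta(B(x,\epsilon(\omega,x)))>0$. Hence, by Fubini, the set
$$
E:=\bigl\{(\omega,x,y)\in\MM^\N\times[0,1]^2:\ y\in B(x,\epsilon(\omega,x))\bigr\}
$$
has positive $\mu^\N\otimes\nu\otimes\eta$-measure, and on $E$ the synchronisation $|g_n^\omega\cdots g_1^\omega x-g_n^\omega\cdots g_1^\omega y|\to 0$ holds.

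Now fix a countable dense family $\{f_k\}\subset C([0,1])$. Since $\nu$ and $\eta$ are ergodic $\mu$-stationary measures, the product measures $\mu^\N\otimes\nu$ and $\mu^\N\otimes\eta$ are ergodic for the skew product $T(\omega,x)=(\sigma\omega,g_1^\omega(x))$. Birkhoff's theorem applied in each case, combined with Fubini, yields a $\mu^\N\otimes\nu\otimes\eta$-full set of triples $(\omega,x,y)$ on which, for every $k$,
\begin{equation*}
\frac{1}{N}\sum_{n=1}^{N}f_k(g_n^\omega\cdots g_1^\omega x)\to\int f_k\,\d\nu,\qquad \frac{1}{N}\sum_{n=1}^{N}f_k(g_n^\omega\cdots g_1^\omega y)\to\int f_k\,\d\eta.
\end{equation*}
Picking $(\omega,x,y)$ in the intersection of this full set with $E$ (which has positive measure), the uniform continuity of each $f_k$ together with the synchronisation forces the two limits to coincide, so $\int f_k\,\d\nu=\int f_k\,\d\eta$ for all $k$. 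Density then yields $\nu=\eta$.

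The delicate input in this plan is Proposition~\ref{contractivity}: adapting the Avila-Viana/Malicet synchronisation argument to the present non-invertible setting requires controlling the generalised Radon-Nikodym derivatives $\d_\nu g$ via Proposition~\ref{prop-RN-deri-NC-int}, and this is where the real work lies. Once that input is granted, the passage to uniqueness is a soft coupling argument that notably does \emph{not} require the joint skew product on $\MM^\N\times[0,1]^2$ to be ergodic for the diagonal action — only the individual ergodicities of $\mu^\N\otimes\nu$ and $\mu^\N\otimes\eta$ are used, which is why no hypothesis beyond $\supp\nu\subseteq\supp\eta$ is imposed on $\eta$.
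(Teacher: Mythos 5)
Your proposal is correct and follows essentially the same route as the paper's own proof: Proposition~\ref{contractivity} supplies a contracted neighbourhood of a $\nu$-generic point, the inclusion $\supp\nu\subseteq\supp\eta$ guarantees that this neighbourhood carries positive $\eta$-mass, and Birkhoff's theorem applied to both stationary measures together with (uniform) continuity forces the two space averages to agree. The only difference is presentational — the paper argues by contradiction with a single Lipschitz function separating $\nu$ from $\eta$, while you argue directly with a countable dense family of continuous functions — which does not change the substance of the argument.
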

This theorem is a direct consequence of the following result that ensures that for a given SDS the positive entropy implies that the system contracts small intervals at exponential rate .
\begin{prop}[Contractivity]\label{contractivity} 
	Let $\mu$ be a Borel probability measure on $\MM $, and let $\nu$ be a $\mu$--invariant atomless ergodic Borel probability measure on the interval $[0,1]$ such that $\ln^+ J$ is $\nu\times\mu$--integrable and  
	$h_{\mu}(\nu)>0$.
	Then for every ${h}\in (0, h_{\mu}(\nu))$ and
	$\mu^{\mathbb{N}}$-a.e. $\omega=(g_n)_{n\in\mathbb{N}}\in \MM^{\mathbb{N}}$ and $\nu$--a.e. $x\in [0,1]$ there exists a closed interval $I=I(\omega,x)$ with $x\in \mathrm{int}(I)$ such that
	$$
	|g_n\circ \ldots \circ g_1(I)|\leq \exp(-n\cdot {h}), \quad \forall n\in\mathbb{N}.
	$$
\end{prop}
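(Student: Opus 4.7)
The plan is to adapt the Avila--Viana--Malicet strategy (to which the authors explicitly point) to this piecewise-monotone, possibly non-invertible setting. The argument splits into three natural steps: a chain rule for the generalized Radon--Nikodym derivative along compositions, Birkhoff's ergodic theorem on a skew product giving pointwise exponential contraction, and a maximal-function step (built on Proposition~\ref{prop-RN-deri-NC-int}) upgrading this to contraction of a fixed interval for every $n$.

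First, I would introduce the skew product $T\colon \MM^{\mathbb{N}}\times[0,1]\to\MM^{\mathbb{N}}\times[0,1]$ defined by $T(\omega,x)=(\sigma\omega,g_1(x))$. It preserves $\mu^{\mathbb{N}}\otimes\nu$ by $\mu$-invariance of $\nu$, and is ergodic by $\mu$-ergodicity of $\nu$. Writing $f^{(n)}_\omega := g_n\circ\cdots\circ g_1$, the definition of $\d_\nu g$ immediately yields the chain rule
$$\d_\nu f^{(n)}_\omega(x)=\prod_{k=1}^n \d_\nu g_k\bigl(f^{(k-1)}_\omega(x)\bigr),$$
valid $(\mu^{\mathbb{N}}\otimes\nu)$-a.e., once one excludes the $\nu$-null set of points whose orbit ever meets some $\DD(g_k)$. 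The observable $\varphi(\omega,x):=\ln\d_\nu g_1(x)$ satisfies $\varphi^+\le \ln^+J\in L^1(\mu^{\mathbb{N}}\otimes\nu)$, so $\int\varphi\,d(\mu^{\mathbb{N}}\otimes\nu)=-h_\mu(\nu)\in[-\infty,-h)$, and Birkhoff's theorem then yields $\tfrac{1}{n}\ln\d_\nu f^{(n)}_\omega(x)\longrightarrow -h_\mu(\nu)$ almost surely.

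The main obstacle is upgrading this \emph{infinitesimal} (in $r\to 0$) contraction to contraction of a \emph{fixed} neighborhood of $x$ that works uniformly in $n$. The key device is the maximal function $Q_n(\omega,x) := \sup_{I\in\JJ^x}\nu(f^{(n)}_\omega(I))/\nu(I)$, which dominates $\d_\nu f^{(n)}_\omega(x)$. Decomposing each $I\in\JJ^x$ along the monotonicity pieces of $f^{(n)}_\omega$ produces a product bound $Q_n(\omega,x)\le \prod_{k=1}^n J(f^{(k-1)}_\omega(x),g_k)$; combining this with Birkhoff applied to $\ln J$, the $L^1$-estimate of Proposition~\ref{prop-RN-deri-NC-int}(ii), and a Borel--Cantelli argument then gives $\tfrac{1}{n}\ln Q_n(\omega,x)\to -h_\mu(\nu)$ a.s. For any $h'\in(h,h_\mu(\nu))$ this produces $r=r(\omega,x)>0$ with
$$\nu\bigl(f^{(n)}_\omega([x-r,x+r])\bigr)\le e^{-nh'}\nu([x-r,x+r])\qquad \forall\, n\ge 1,$$
after shrinking $r$ to absorb a finite bad range of $n$. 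Finally, since each $g_k$ is continuous, $J_n:=f^{(n)}_\omega([x-r,x+r])$ is a closed subinterval of $[0,1]$ containing $f^{(n)}_\omega(x)\in\supp\nu$ for typical $\omega$; atomlessness of $\nu$ together with a recurrence argument then converts the exponential $\nu$-decay of $J_n$ into exponential decay of its length $|J_n|$ at rate $h$.

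I expect the maximal-function step to be the principal technical hurdle: while the chain rule, Birkhoff's theorem, and the $L^1$-estimate in Proposition~\ref{prop-RN-deri-NC-int}(ii) are each in place, one must carefully track the combinatorics of monotonicity partitions of iterated compositions $f^{(n)}_\omega$ and verify that the distortion introduced by these partitions does not spoil the exponential rate --- this is precisely the point where the non-invertibility forces a genuine modification of the Avila--Viana--Malicet argument.
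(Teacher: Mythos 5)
Your architecture (skew product, Birkhoff, a maximal function for the iterated composition, then a length-versus-measure comparison) is the same as the paper's, but there is a genuine gap at the centre of the maximal-function step, and it is exactly the point you flagged as "the principal technical hurdle" without resolving it. Your product bound $Q_n(\omega,x)\le\prod_{k=1}^n J(f_\omega^{(k-1)}(x),g_k)$ is correct, but applying Birkhoff to $\ln J$ gives convergence of $\tfrac1n\sum_{k}\ln J(X_{k-1}^x,g_k)$ to $\int\ln J\,\d(\nu\otimes\mu)$, \emph{not} to $-h_\mu(\nu)$. Since $J(x,g)=\sup_{I\in\JJ^x}\nu(g(I))/\nu(I)$ is a supremum over \emph{all} intervals containing $x$, one may take $I=[0,1]$ and get $J(x,g)\ge\nu(g([0,1]))$, which equals $1$ whenever $g$ is surjective; so $\int\ln J$ is typically nonnegative, and in general only $\int\ln J\ge\int\ln\d_\nu g=-h_\mu(\nu)$ holds, with no reason for equality. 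Your chain of inequalities therefore yields no exponential decay of $Q_n$ at all, and the subsequent "shrink $r$ to absorb a finite bad range of $n$" cannot repair an asymptotic rate that is wrong.

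The paper's proof fixes precisely this by truncating the maximal function: it introduces $J_\epsilon(x,g)$, the same supremum restricted to intervals with $\nu(I)\le\epsilon$, and proves (using Proposition~\ref{prop-RN-deri-NC-int} and the integrability of $\ln^+J$) that $J_\epsilon\downarrow\d_\nu g$ $\nu$-a.s.\ and that $h_\mu^\epsilon(\nu)\to h_\mu(\nu)$, so that for suitable $\epsilon$ one has $h_\mu^\epsilon(\nu)>\widetilde h$ and Birkhoff applied to $\ln J_\epsilon$ gives the desired exponential bound $\prod_{k=0}^{n-1}J_\epsilon(X_k^x,g_{k+1})\le C e^{-n\widetilde h}$. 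The price of the truncation is that the one-step estimate $\nu(g_{k+1}(I_k))/\nu(I_k)\le J_\epsilon(X_k^x,g_{k+1})$ is only legitimate when $\nu(I_k)\le\epsilon$; this is restored by an induction in which the already-proved decay $\nu(I_k)\le Ce^{-k\widetilde h}\nu(I_0)$ keeps every image $\epsilon$-small provided $\nu(I_0)<\epsilon/(1+C)$. This bootstrapping is the missing idea in your proposal. Your final step is also vaguer than it should be: the conversion from $\nu$-decay to length-decay is not a "recurrence argument" but the observation that $Q^*(x)=\sup_{I\in\JJ^x}|I|/\nu(I)$ has $\ln^+Q^*\in L^1(\nu)$ by Proposition~\ref{prop-RN-deri-NC-int}(2) with $\lambda=\Leb$, whence Birkhoff gives $\ln^+Q^*(X_n^x)/n\to0$ and $|I_n|\le e^{n\kappa}\nu(I_n)$ for large $n$, at the cost of an arbitrarily small loss $\kappa$ in the exponent.
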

D. Malicet \cite{M} proved this result when $g$ are homeomorphisms of the circle. With some precaution, his technique can be easily adapted to the case of piecewise monotone functions. For completeness, a proof is given in the appendix.

\begin{proof}[Proof of Theorem \ref{thm: pos h}]
	Assume that $\nu$ is an ergodic, $\mu$--invariant, atomless Borel probability measure on $[0,1]$ with $\supp(\nu)\subseteq \supp(\eta)$.
	Assume $\nu\neq \eta$. Then there exists a Lipschitz function $f$ on $[0,1]$ such that
	$$
	\int_{[0, 1]} f \, \mathrm{d}\nu \neq \int_{[0, 1]} f \, \mathrm{d}\eta.
	$$  
	By Birkhoff's Ergodic Theorem, for $\mu^{\otimes\mathbb{N}}$--a.e. $\omega=(g_1,g_2,\ldots)\in\MM^\N$ we have
	\begin{equation}\label{eq 2 erg}
		\lim_{n\to\infty}\frac{1}{n}\sum_{i=1}^n f(g_i\circ\cdots\circ g_1(x))=\int_{[0, 1]} f \, \mathrm{d}\nu \quad \text{for $\nu$-a.e. $x\in [0,1]$}.
	\end{equation} 
	Fix ${h}\in (0, h_{\mu}(\nu))$.  By Proposition \ref{contractivity}, there exist $x_0\in \supp\nu$ and $\Xi= \Xi(x)\in \mathcal{B}(\MM^{\mathbb N})$ with $\mu^{\otimes\mathbb N}(\Xi)>0$ such that, for all $\omega=(g_1, g_2,\ldots)\in\Xi$, convergence  (\ref{eq 2 erg}) holds and  
	\begin{equation}\label{eq contracting}
		|g_n\circ \cdots \circ g_1(I)|\leq \exp(-n\cdot {h})
	\end{equation}
	for some neighborhood $I=I(x_0,\omega)$ of $x_0$. Moreover, since $x_0\in \supp(\nu)\subseteq \supp(\eta)$, we have  $\eta(I)>0$. Thus, there exists $y \in I$ such that 
	$$
	\lim_{n\to\infty}\frac{1}{n}\sum_{i=1}^n f(g_i\circ\cdots\circ g_1({y}))=\int_{[0, 1]} f\,  \mathrm{d}\eta \neq \int_{[0, 1]} f\,  \mathrm{d}\nu,
	$$ 
	for $\mu^{\otimes\mathbb{N}}$--a.e. $(g_1,g_2,\ldots)\in\Xi$, which is impossible because $f$ is a Lipschitz function and 
	$$
	|g_n\circ \cdots \circ g_1(x)-g_n\circ \cdots \circ g_1(y)|\leq |g_n\circ \cdots \circ g_1(I)|\to 0\quad\text{as $n\to\infty$},
	$$
	by condition (\ref{eq contracting}). The proof is complete.
\end{proof}

\section{$\mu$--injectivity and positive entropy}\label{sec: 3}
In the previous section, we saw that positive entropy can be used to prove the stability of SDSs. However, calculating the entropy can be challenging, so we want to provide in this section a more manageable condition.

\subsection{Cardinality of pre-images}
For $g\in \MM $ and $x\in[0,1]$, let us denote 
$$
{n}(g,x):=\# (g^{-1}(x)),
$$ 
i.e., the number of preimages of $x$ under the map $g$, with the convention $\# ( \emptyset ):=0$. Note that $g\mapsto n(g,x)$ is measurable for every $x\in[0,1]$. 
Observe also that if $\{I_i^g\}_{i=1}^{n(g)}$ is a collection of covering intervals such that $g$ is monotone on each of the pieces, then we have
\begin{equation}\label{sum}
	n(g, x)=\sum_{i=1}^{n(g)}{\bf 1}_{g (I_i^g)}(x)\qquad\text{for all } x\not\in \DD(g).
\end{equation} 
The following lemma shows that if the number of preimages is bounded in mean, then the entropy of the system is well--defined. 
\begin{lem}
	If $\nu$ has no atoms and $n\in L^1(\MM\times[0,1],\mu\times\nu)$, then $\ln^+J\in L^1(\MM\times[0,1],\mu\times\nu)$ and $h_\mu(\nu)$ is well-defined. 
\end{lem}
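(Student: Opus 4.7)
The plan is to reduce the integrability of $\ln^+ J$ (and hence of $\ln^+ \d_\nu g$) to the maximal-function bound provided by Proposition~\ref{prop-RN-deri-NC-int}(2), applied to the finite dominating measure $\overline{\nu}^g$. First, note that since
$$\d_\nu g(x)=\lim_{r\to 0}\frac{\nu(g([x-r,x+r]))}{\nu([x-r,x+r])}\leq \sup_{I\in\JJ^x}\frac{\nu(g(I))}{\nu(I)}=J(x,g)$$
at every $x$ where the limit exists, it suffices to prove $\ln^+ J\in L^1(\MM\times[0,1],\mu\times\nu)$; this will in turn give $\ln^+\d_\nu g\in L^1$, which is exactly what is needed for $h_\mu(\nu)=-\int \ln(\d_\nu g)\,d\mu\,d\nu$ to be well-defined in $(-\infty,+\infty]$.

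Next, observe that $\nu(g(I))=g^{-1}\nu(I)$ for every Borel set $I$, and by~(\ref{eq-sup-nu-g}) we have $g^{-1}\nu(I)\leq \overline{\nu}^g(I)$. Hence
$$J(x,g)\leq \sup_{I\in\JJ^x}\frac{\overline{\nu}^g(I)}{\nu(I)}=Q^*_\nu(\overline{\nu}^g,x).$$
Applying Proposition~\ref{prop-RN-deri-NC-int}(2) to the finite Borel measure $\overline{\nu}^g$ yields
$$\int_{[0,1]}\ln^+ J(x,g)\,d\nu(x)\leq \int_{[0,1]}\ln^+ Q^*_\nu(\overline{\nu}^g,x)\,d\nu(x)\leq 2\overline{\nu}^g([0,1]).$$

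It remains to evaluate $\overline{\nu}^g([0,1])$. By definition, $\nu_i^g([0,1])=(\gamma_i^g)^{-1}\nu(I_i^g)=\nu(g(I_i^g))$, so $\overline{\nu}^g([0,1])=\sum_{i=1}^{N(g)}\nu(g(I_i^g))$. Since $\nu$ is atomless and $\DD(g)$ is finite, identity~(\ref{sum}) and Tonelli's theorem give
$$\int_{[0,1]}n(g,x)\,d\nu(x)=\sum_{i=1}^{N(g)}\nu(g(I_i^g))=\overline{\nu}^g([0,1]).$$
Integrating the previous bound against $\mu$ and using Tonelli once more,
$$\int_\MM\!\!\int_{[0,1]}\ln^+ J(x,g)\,d\nu(x)\,d\mu(g)\leq 2\int_\MM\!\!\int_{[0,1]}n(g,x)\,d\nu(x)\,d\mu(g)<+\infty$$
by the hypothesis $n\in L^1(\MM\times[0,1],\mu\times\nu)$. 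This proves $\ln^+ J\in L^1$ and, via the pointwise bound $\ln^+\d_\nu g\leq \ln^+ J$, ensures that the negative part of $\ln(\d_\nu g)$ is integrable, so $h_\mu(\nu)$ is well-defined.

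The only substantive step is the passage from the set function $g^{-1}\nu$, which fails to be $\sigma$-additive, to the genuine finite measure $\overline{\nu}^g$; once this domination is in place, the delicate work (handling non-centered intervals in the maximal function) is entirely absorbed by Proposition~\ref{prop-RN-deri-NC-int}(2), and the remainder is a bookkeeping application of Fubini and~(\ref{sum}).
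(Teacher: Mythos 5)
Your proof is correct and follows essentially the same route as the paper: dominate $J(x,g)$ by the maximal function $Q^*_\nu(\overline{\nu}^g,x)$ of the genuine finite measure $\overline{\nu}^g$, invoke Proposition~\ref{prop-RN-deri-NC-int}(2), and identify $\overline{\nu}^g([0,1])$ with $\int n(g,x)\,\d\nu(x)$ via~(\ref{sum}) and atomlessness. The only blemish is the final phrase ``the negative part of $\ln(\d_\nu g)$ is integrable'': integrability of $\ln^+\d_\nu g$ controls the \emph{positive} part of $\ln(\d_\nu g)$, i.e.\ the negative part of $-\ln(\d_\nu g)$, which is what makes $h_\mu(\nu)$ well-defined in $(-\infty,+\infty]$, as you correctly stated earlier.
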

\begin{proof}
	Observe that by (\ref{eq-sup-nu-g}) 
	\begin{align*}
		J(g,x)&=\sup_{I\in\JJ^x}\frac{\nu(g(I))}{\nu(I)}\leq \sup_{I\in\JJ^x} \frac{\overline{\nu}^g(I)}{\nu(I)}=: Q^*_\nu(\overline{\nu}^g,x),
	\end{align*}
	where $\JJ^x$ is given by (\ref{e1_12.03.24}). Thus by Proposition \ref{prop-RN-deri-NC-int} and (\ref{sum}),
	\begin{align*} 
		\int_{[0,1]} \ln^+ J(g,x) \d\nu(x) &\leq  \int_{[0,1]} \ln^+  Q^*_\nu(\overline{\nu}^g,x) \d\nu(x)\\
		&\leq 2\overline{\nu}^g([0,1])=2 \sum_{i\in\II(g)} \nu_i^g([0,1]) \\
		&= \sum_{i\in\II(g)}\nu(g(I_i^g))= 2\int_{[0,1]} \sum_{i\in \II(g)}{\bf 1}_{g (I_i^g)}(x) \d\nu(x) \\
		&= 2\int_{[0,1]} n(g,x) \d\nu(x),
	\end{align*}
	since $\nu(\DD(g))=0$.	 
\end{proof}

\subsection{Entropy of $\mu$--injective SDS}

\begin{definition}\label{def: mu-inj} We say that a stochastic dynamical system $(\MM,\mu)$ is \textit{$\mu$--injective in $x\in [0,1]$} if 
	\begin{equation}\label{eq-mu-inj}
	\int_{\MM} n(g,x) \, \d \mu(g) \leq  1 .
\end{equation}
\end{definition}

\begin{prop}\label{prop: h pos} Let $(\MM,\mu)$ be a stochastic dynamical system, and let $\nu$ be an atomless Borel probability measure on $[0,1]$. If  $(\MM,\mu)$ is $\mu$--injective for $\nu$-almost all $x\in [0,1]$,
%	If 
%	\begin{equation}\label{preimage} \int_{\MM} n(g,x) \, \d \mu(g) < 1 \text{ for $\nu$-almost all $x\in [0,1]$,}
%	\end{equation}
%	then 	$h_{\mu}(\nu)>0$ (possibly infinite). 
%	\begin{equation}\label{one} \int_{\MM} n(g,x) \, \d \mu(g) \leq 1 \text{ for $\nu$-almost all $x\in [0,1]$,}
%	\end{equation}
	then either $h_{\mu}(\nu)>0$ (possibly infinite) or 
	$\d_{\nu}g(x)\equiv 1$ for $\nu$--a.e. $x\in [0, 1]$ and  $\mu$--a.e. $g\in\MM$.
\end{prop}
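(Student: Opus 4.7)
The plan is to derive $h_\mu(\nu)\ge 0$ from a Jensen-type argument on the probability space $([0,1]\times\MM,\nu\times\mu)$, using $\mu$-injectivity to control the total mass of $\d_\nu g$. The key intermediate estimate will be
\begin{equation*}
\int_{[0,1]\times\MM}\d_\nu g(x)\,\d\nu(x)\,\d\mu(g)\le 1,
\end{equation*}
after which concavity of $\ln$ yields the result.

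First I would fix $g\in\MM$ and bound $\int\d_\nu g\,\d\nu$ in terms of $n(g,\cdot)$. Since $\nu$ is atomless, $\nu(\DD(g))=0$, so we may decompose the integral over the interiors of the monotonicity intervals $I_i^g\in\II(g)$. On $\mathrm{int}(I_i^g)$, by definition $\d_\nu g(x)=\frac{\d\nu_i^g}{\d\nu}(x)$, and the standard Radon--Nikodym bound recalled in Section~\ref{sec: 2} gives
\begin{equation*}
\int_{I_i^g}\frac{\d\nu_i^g}{\d\nu}(x)\,\d\nu(x)\le \nu_i^g(I_i^g)=(\gamma_i^g)^{-1}\nu(I_i^g)=\nu(g(I_i^g)).
\end{equation*}
Summing in $i$ and using formula (\ref{sum}) together with $\nu(\DD(g))=0$ rewrites the right-hand side as $\int n(g,x)\,\d\nu(x)$. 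Integrating against $\mu$, applying Fubini and the $\mu$-injectivity hypothesis (\ref{eq-mu-inj}) gives the desired global bound by $1$. In particular, this already shows $n\in L^1(\nu\times\mu)$, so the previous lemma applies and $h_\mu(\nu)\in(-\infty,+\infty]$ is well defined.

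Next I would apply Jensen's inequality for the concave function $\ln$ on the probability space $(\nu\times\mu)$ to the non-negative function $\d_\nu g$:
\begin{equation*}
\int\ln(\d_\nu g)\,\d(\nu\times\mu)\le \ln\!\left(\int\d_\nu g\,\d(\nu\times\mu)\right)\le \ln 1=0,
\end{equation*}
which immediately yields $h_\mu(\nu)\ge 0$ (with the convention that $h_\mu(\nu)=+\infty$ if $\int\ln\d_\nu g\,\d(\nu\times\mu)=-\infty$). For the dichotomy, suppose $h_\mu(\nu)=0$. Then both inequalities above are equalities. The second forces $\int\d_\nu g\,\d(\nu\times\mu)=1$, and the first, by strict concavity of $\ln$, forces $\d_\nu g$ to be $(\nu\times\mu)$-a.e.\ constant; that constant must equal the mean $1$. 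Hence $\d_\nu g(x)\equiv 1$ for $\nu$-a.e.\ $x$ and $\mu$-a.e.\ $g$.

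The only delicate point is the first step: $g^{-1}\nu$ is only locally a measure, so one must carefully split the integral on the pieces $I_i^g$ and invoke the singular-part inequality $\int_A \frac{\d\lambda}{\d\nu}\d\nu\le\lambda(A)$, rather than an equality, before summing and matching the result with $n(g,x)$ via (\ref{sum}). Everything else is a direct application of Jensen and Fubini, with $\mu$-injectivity entering exactly to make the total mass bound equal to $1$.
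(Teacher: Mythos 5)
Your proposal is correct and follows essentially the same route as the paper: the same local decomposition of $\int \d_\nu g\,\d\nu$ over the monotonicity pieces $I_i^g$ bounded by $\sum_i\nu(g(I_i^g))=\int n(g,\cdot)\,\d\nu$ via (\ref{sum}), followed by Jensen's inequality and the equality analysis via strict concavity of $\ln$. The only cosmetic difference is that you phrase the dichotomy as ``if $h_\mu(\nu)=0$ then both inequalities are equalities,'' whereas the paper argues ``the inequality is strict unless $\d_\nu g\equiv c$ and then rules out $c<1$''; these are logically equivalent.
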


\begin{proof} Since $g^{-1}\nu$ coincides with the measure $\nu^g_i:= (\gamma_i^g)^{-1}\nu\vert_{I_i^g}$ on each $I_i^g\in I(g)$, we have
	\begin{equation}\label{int n}
		\begin{aligned}
			\int_{[0, 1]} \d_{\nu} g(x) \, \d\nu(x)&=\sum_{i\in \II(g)} \int_{I_i^g} \frac{\d g^{-1} \nu}{\d \nu}(x) \, \d\nu(x) \\
			&= \sum_{i\in \II(g)} \int_{I_i^g} \frac{\d\nu_i^g}{\d \nu}(x) \, \d\nu(x)\\
			&\leq \sum_{i\in I} \nu(g(I_i^g))\\
			&= \int_{[0, 1]} n(g,x)\, \d\nu(x),
		\end{aligned}
	\end{equation}
	where the last equality follows from (\ref{sum}) and the fact that $\nu(\DD(g))=0$.
	Thus, by Jensen's inequality and Fubini's theorem,
	\begin{eqnarray*}
	h_{\mu}(\nu)&= &\int_{\MM} \int_{[0, 1]} -\ln(\d_{\nu} g(x)) \, \d\nu(x)\, \d\mu(g)\\ &\geq&	-\ln\Big( \int_{\MM} \int_{[0, 1]} \d_{\nu} g(x) \, \d\nu(x)\, \d\mu(g)\Big)\\&
	\geq &
	-\ln\Big(
	\int_{[0, 1]} \int_{\MM} n(g,x) \, \d\mu(g)\, \d\nu(x)\Big).
		\end{eqnarray*}	
	 %which is strictly positive if (\ref{preimage}) holds. (Observe that $n(g,x)$ cannot be negative.)
Now let us assume that (\ref{eq-mu-inj}) holds for $\nu$--a.e $x$. Then, as above by the Jensen inequality, we have $h_{\mu}(\nu)\geq -\ln(1)= 0$ and this inequality is strict unless $\d_{\nu}g(x)\equiv c$ for $\nu$--a.e. $x\in X$, for $\mu$--almost every $g\in\MM$. In the latter case, we obtain again by (\ref{int n}) and (\ref{eq-mu-inj}) that 
$$
c= \int_{\MM} \int_{[0, 1]} d_{\nu}g(x)\, \d\nu(x)\, \d\mu(g)\leq \int_{[0, 1]} \int_{\MM}n(g,x)\, \d\mu(g)\, \d\nu(x)\leq 1.
$$ 
Now, if $c<1$ then $h_{\mu}(\nu)>0$. In the remaining case  $c=1$. Hence $\d_{\nu}g(x)=1$ for $\nu$--a.e. $x\in [0, 1]$, for $\mu$--almost every $g\in\MM$.
\end{proof}

Proposition \ref{prop: h pos} should be considered a generalization to $\mu$-injective SDSs of the well-known result valid for a SDS driven by invertible maps $g$. Indeed, then either $h_\mu(\nu)>0$ or $g\nu=\nu$ for $\mu$--almost every $g$ (see, for instance, \cite[Proposition 3.7]{M}).
However, in the case of a non-invertible map, the fact that $d_\nu g \equiv 1$ does not imply that $\nu$ is $g$--invariant. We have, for instance, the following example, which also provides an example of a $\mu$--injective SDS that has several (atomic) ergodic $\mu$--invariant probability measures.
\begin{ex}\label{ex: mu inj}
	Suppose that $\mu$ is supported on $\Gamma=\{\phi_1,\phi_2\}$ consisting of a $\frac{1}{2}$-tent-map and an upside-down $\frac{1}{2}$-tent-map, as shown below.

	\begin{center}
		\begin{tikzpicture}
			\draw (0,0) rectangle (2,2);
			\draw[red] (0,0) -- (1,1) -- (2,0);
			\draw (0.5,0.5) node[anchor=north]{\begin{color}{red}$\phi_1$\end{color}};
			\draw (0,0) node[anchor=north]{0};
			\draw (2,0) node[anchor=north]{1};
			\draw (0,2) node[anchor=east]{1};
			\draw (1,-0.05)--(1,0.05) node[anchor=north]{$\frac{1}{2}$};
			\draw (-0.05,1)--(0.05,1) node[anchor=east]{$\frac{1}{2}$};
			\draw[blue] (0,2) -- (1,1) -- (2,2);
			\draw (1.5,1.5) node[anchor=north]{\begin{color}{blue}$\phi_2$\end{color}};
		\end{tikzpicture}
	\end{center}
	
Assume that $\mu(\phi_i)=\frac{1}{2}$ for $i=1,2$. This system is $\mu$--injective, since
	$$
	\begin{aligned}
		\int_{\MM} n(g,x) \, \d \mu(g)
		&= 2\mu(\phi_1)1_{[0,\frac{1}{2})}(x)+(\mu(\phi_1)+\mu(\phi_2))1_{\{\frac{1}{2}\}}(x)\\	&\quad +2\mu(\phi_2)1_{(\frac{1}{2},1]}(x)=1.
	\end{aligned}
	$$
	It is easily checked that any probability measure $\nu$ symmetric with respect to $1/2$ (that is invariant under the map $\sigma: x\mapsto 1-x$) is $\mu$-invariant. For instance, the Lebesgue measure on $[0, 1]$ is $\mu$--invariant, but also the restriction of the Lebesgue measure to the invariant set $Y=[0, 1/4]\cup [3/4, 1]$.
	
	For all these measures we have $\d_\nu g\equiv 1$. In fact, since $\phi_1(I)=I$ for $I\subseteq [0,1/2]$ and $\phi_1(I)=\sigma(I)$ for $I\subseteq[1/2,1]$ (and reversely for $\phi_2$), we have
	$$
	\d_\nu\phi_i(x)=\lim_{r\to 0}\frac{\nu(\phi_i([x-r,x+r]))}{\nu([x-r,x+r])}=\lim_{r\to 0}\frac{\nu([x-r,x+r])}{\nu([x-r,x+r])}=1.
	$$
	
	However, a symmetric measure $\nu$ is not $\phi_1$-invariant, since for $I\subseteq [0,1/2]$ we have
	$$
	\phi_1\nu(I)=\nu(\phi_1^{-1}(I))=\nu(I\cup\sigma(I))=2\nu(I),
	$$
	whereas $\phi_1\nu(I)=0$ for $I\subseteq[1/2,1]$.
	
	As a matter of  fact, this SDS has infinitely many ergodic measures, which are atomic. In fact, every measure $\nu_{x_0}=\frac{1}{2}\delta_{x_0}+\frac{1}{2}\delta_{\sigma(x_0)}$ for $x_0\in[0,1/2]$ is ergodic.	
\end{ex}

The natural question arises:
\vskip2mm
\textbf{Open question:} Does there exist an SDS with an \textit{atomless} ergodic $\mu$--invariant measure $\nu$ such that $\d_\nu g \equiv 1$, but $\nu$ is not $g$--invariant?

\subsection{Practical condition to ensure $h_\mu(\nu)>0$}

% {
	We provide in this section a hands-on condition that ensure that $d_\nu g\not \equiv 1$.

\begin{definition}\label{def: unbounded} 
	We say that  a stochastic system $(\MM,\mu)$  \textit{contracts a neighbouhood of $x_0\in[0,1]$ } if there exists $\varepsilon>0$ such that the set of $g\in\MM$ satisfying:
$$
  g(x_0)=x_0
  $$
 and 
 $$
   |g(x)-g(x_0)|<|x-x_0|\quad\text{for all}\,\, x\in [x_0-\epsilon, x_0+\epsilon]\cap[0,1]
$$
has positive $\mu$-measure.
\end{definition}
If $x_0=0$ this means that the  stochastic system  is \textit{below diagonal on a neighbourhood of $0$ } that is there exists $\varepsilon>0$ such that
$$
\mu(\{g\in\MM\, :\, g(0)=0\ \mbox{ and }\ g(x)<x\ \forall x\in (0,\epsilon]\})>0,
$$
and, analogously, if $x_0=1$, \textit{above diagonal on a neighbourhood of $1$}, that is there exists $\varepsilon>$ such that
$$\mu(\{g\in\MM\, :\, g(1)=1\ \mbox{ and }\   g(x)>x\ \forall x\in [1-\epsilon,1)\})>0.
$$ 
If $x_0\in(0,1)$, it means that it crosses the diagonal with a "slope less the one".

\begin{thm}\label{thm: mu inj implies pos h}
	Let $(\MM,\mu)$ be a stochastic dynamical system $\mu$-injective in all but countably many $x\in[0,1]$, and assume  that it contracts a neighbouhood of $x_0\in[0,1]$.
Moreover, assume that $\nu$ is an atomless $\mu$--invariant probability measure with $x_0\in\supp(\nu)$.  Then $h_{\mu}(\nu)>0$.
	
In particular if $\nu$ is ergodic,  any other $\mu$-invariant, ergodic Borel probability measure  {$\eta$ whose support contains $\supp(\nu)$ coincide with $\nu$}. 
\end{thm}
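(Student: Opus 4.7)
The plan is to invoke the dichotomy of Proposition \ref{prop: h pos}: since $\nu$ is atomless, the countably many points where $\mu$--injectivity fails have $\nu$--measure zero, so either $h_\mu(\nu)>0$ (the desired conclusion) or $\d_\nu g(x)\equiv 1$ for $\nu$--a.e.\ $x$ and $\mu$--a.e.\ $g$. The whole argument then reduces to ruling out the second alternative using the contraction at $x_0\in\supp(\nu)$. Once $h_\mu(\nu)>0$ is established, the uniqueness statement follows from Theorem \ref{thm: pos h}: $\mu$--injectivity yields $\iint n\,\d\nu\,\d\mu\le 1$, hence $n\in L^1(\MM\times[0,1],\mu\times\nu)$, and the lemma opening this section then gives $\ln^{+}J\in L^{1}(\MM\times[0,1],\mu\times\nu)$.

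Assume for contradiction that $\d_\nu g\equiv 1$. Then $\int\d_\nu g\,\d\nu = 1$ for $\mu$--a.e.\ $g$, and plugging the Lebesgue decomposition $\d\nu_i^g = \d\nu\vert_{I_i^g} + \d\nu_{i,\text{sing}}^g$ into the chain (\ref{int n}) produces $1 = \int \d_\nu g\,\d\nu \le \int n(g,\cdot)\,\d\nu = 1 + \sum_i \nu_{i,\text{sing}}^g(I_i^g)$. Integrating in $g$ and using $\mu$--injectivity ($\iint n\,\d\nu\,\d\mu\le 1$ via Fubini) forces the nonnegative singular masses $\nu_{i,\text{sing}}^g(I_i^g)$ to vanish for $\mu$--a.e.\ $g$ and every $i$. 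Thus $\nu_i^g = \nu\vert_{I_i^g}$, which reads as a \emph{local $g$--invariance of $\nu$}: for $\mu$--a.e.\ $g$ and every $i$, $\nu(g(A))=\nu(A)$ for every Borel $A\subseteq I_i^g$.

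Since the contracting $g$'s form a set of positive $\mu$--measure, I select $g_0$ enjoying both the contraction property and the local invariance above. Define $f(s):=\max_{|x-x_0|\le s}|g_0(x)-x_0|$: continuity of $g_0$ together with the strict inequality $|g_0(x)-x_0|<|x-x_0|$ on the punctured ball implies that $f$ is continuous with $f(s)<s$ on $(0,\varepsilon]$ and $f(0)=0$, so the iterates $s_{n+1}:=f(s_n)$ decrease to $0$. Choose $s\le\varepsilon$ so small that $B(x_0,s)\cap[0,1]$ is covered by the (at most two) monotonicity pieces of $g_0$ adjacent to $x_0$, and pick a subinterval $J\subseteq B(x_0,s)\cap[0,1]$ having $x_0$ as one endpoint with $\nu(J)>0$; such a $J$ exists because $x_0\in\supp(\nu)$. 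Now induct: $J$ lies in one adjacent piece, and since $g_0$ fixes $x_0$ and is monotone on that piece, $g_0(J)$ is a connected subinterval with $x_0$ as an endpoint, contained in $B(x_0,s_1)$, and therefore sits inside one of the two pieces adjacent to $x_0$. Local invariance gives $\nu(g_0(J))=\nu(J)$, and iterating the same reasoning yields $\nu(g_0^n(J))=\nu(J)$ while $g_0^n(J)\subseteq B(x_0,s_n)$ for every $n$. Atomlessness then forces $\nu(B(x_0,s_n))\to 0$, contradicting $\nu(J)>0$. The subtlest point of the plan is precisely this induction: ensuring that $g_0^n(J)$ remains inside a single monotonicity piece of $g_0$ so the local invariance can be re-applied, which works because $x_0$ is a fixed point and every $g_0^n(J)$ is a connected interval with $x_0$ as an endpoint shrinking down to $\{x_0\}$.
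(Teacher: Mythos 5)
Your proposal is correct and follows essentially the same route as the paper: the dichotomy of Proposition \ref{prop: h pos} combined with ruling out $\d_\nu g\equiv 1$ by iterating a contracting map fixing $x_0$ and invoking atomlessness, exactly as in Lemma \ref{lem: gnu}. The only (harmless) difference is that you spend extra effort upgrading the local relation to the equality $\nu(g(A))=\nu(A)$ by killing the singular parts via a second use of $\mu$--injectivity, whereas the paper only needs the one-line inequality $\nu(A)\le\nu(g(A))$, which already follows from $\d_\nu g\equiv 1$ and suffices for the same iteration argument.
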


The above theorem follow immediately from Proposition \ref{prop: h pos} and the following lemma:

\begin{lem}\label{lem: gnu}
	Let $(\MM,\mu)$ be a stochastic dynamical system $\mu$-injective in all but countably many $x\in[0,1]$, and assume  that it contracts a neighbourhood of $x_0\in[0,1]$.
Moreover, assume that $\nu$ is an atomless $\mu$--invariant probability measure with $x_0\in\supp(\nu)$. Then  $$\nu\otimes\mu(\{(x,g)\in [0,1]\times \MM\, : \, d_{\nu}g(x)\neq 1\})>0.
	$$  
\end{lem}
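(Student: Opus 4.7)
The plan is to argue by contradiction: suppose $d_{\nu}g(x) = 1$ for $\nu\otimes\mu$-a.e.\ $(x,g)$. The first step is to promote this pointwise identity into what I will call \emph{branchwise measure preservation}: for $\mu$-a.e.\ $g$ and every branch $I_i^g\in\II(g)$,
$$
\nu(g(A)) = \nu(A)\quad\text{for every Borel } A\subseteq I_i^g.
$$
This comes from saturating the chain~(\ref{int n}) in the proof of Proposition~\ref{prop: h pos}: the assumption $d_\nu g \equiv 1$ yields $\int\!\int d_\nu g\,\d\nu\,\d\mu = 1$, while $\mu$-injectivity at $\nu$-a.e.\ $x$ (note $\nu$ is atomless, so it ignores the countable exceptional set) gives $\int\!\int n(g,x)\,\d\nu\,\d\mu \leq 1$. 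Combined with the branchwise estimate in~(\ref{int n}), equality must hold throughout for $\mu$-a.e.\ $g$, forcing $\int_{I_i^g}(\d\nu_i^g/\d\nu)\,\d\nu = \nu_i^g(I_i^g)$ on every branch; together with $\d\nu_i^g/\d\nu = 1$, this upgrades to $\nu_i^g = \nu\vert_{I_i^g}$, i.e.\ the displayed identity.

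Next, exploit the contraction. Let $B\subseteq \MM$ denote the $\mu$-positive set of maps contracting $B(x_0,\varepsilon)$, and pick $g\in B$ for which the branchwise identity holds. I then look for a small closed interval $V$ with $x_0\in V$, contained in a single monotone branch $\gamma$ of $g$, such that $\gamma(V)\subsetneq V$ and $\nu(V)>0$. If $x_0\in \mathrm{int}(I_{i^*}^g)$ for some $i^*$, take $V = B(x_0,r)\cap[0,1]$ and $\gamma = \gamma_{i^*}^g$; the contraction forces $\gamma(V)\subsetneq V$. If instead $x_0\in \DD(g)$, then $g$ has a local extremum at $x_0$; in the local-maximum case $g$ sends $B(x_0,r)$ into $[x_0-\rho,x_0]$ for some $\rho<r$, and one takes $V = [x_0-r,x_0]$ with $\gamma$ the (increasing) left branch $\gamma_L$, obtaining $\gamma_L(V)\subsetneq V$. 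To secure $\nu(V)>0$: either it holds directly, or all $\nu$-mass near $x_0$ lies on the opposite side, in which case the branchwise identity applied once to the right branch $\gamma_R$ sends $[x_0,x_0+r]$ into a subset of $V$ of equal $\nu$-measure, yielding $\nu(V)>0$. The local-minimum case is symmetric.

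Finally, iterate. Branchwise preservation gives $\nu(\gamma^n(V))=\nu(V)$ for all $n$, while the strict contraction $|\gamma(y)-x_0|<|y-x_0|$ on $V\setminus\{x_0\}$ makes $|\gamma^n(y)-x_0|$ decrease monotonically to zero for each $y\in V$; Dini's theorem upgrades this to uniform convergence, so $\diam \gamma^n(V)\to 0$. The nested sets $\gamma^n(V)$ then satisfy $\bigcap_n \gamma^n(V) = \{x_0\}$, and continuity of $\nu$ gives $\nu(V) = \lim_n \nu(\gamma^n(V)) = \nu(\{x_0\}) = 0$, contradicting $\nu(V)>0$. The main obstacle is the kink case $x_0\in\DD(g)$: the map $g$ is not a local homeomorphism at $x_0$, so one cannot iterate $g$ directly; the fix is to iterate a single monotone branch of $g$ on a one-sided neighborhood, using the branchwise identity both to control $\nu$-mass under iteration and, via a one-step transfer through the opposite branch, to guarantee $\nu(V)>0$ on the side where contraction operates.
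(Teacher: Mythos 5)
Your proof is correct and its core mechanism is the same as the paper's: assume $\d_\nu g\equiv 1$, extract a positive-measure set of contracting maps that are monotone on each one-sided neighbourhood $[x_0-\tfrac1m,x_0]$ and $[x_0,x_0+\tfrac1m]$, iterate one monotone branch so that the images shrink to $\{x_0\}$, and use atomlessness of $\nu$ to force $\nu$ to vanish near $x_0$, contradicting $x_0\in\supp\nu$. The one genuine difference is the measure-theoretic input you feed into the iteration. You establish the full branchwise \emph{equality} $\nu(g(A))=\nu(A)$ by saturating the chain (\ref{int n}) against the $\mu$-injectivity bound $\int\!\!\int n\,\d\mu\,\d\nu\le 1$, which kills the singular part of $\nu_i^g$ on every branch; this is correct but costs you an extra argument and genuinely uses $\mu$-injectivity. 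The paper instead observes that $\d_\nu g\equiv 1$ alone already gives the one-sided inequality $\nu(A)=\int_A(\d\nu_i^g/\d\nu)\,\d\nu\le\nu_i^g(A)=\nu(g(A))$ for $A\subseteq I_i^g$, and the increasing chain $\nu(J_m^+)\le\nu(g(J_m^+))\le\cdots\le\nu(g^n(J_m))\to 0$ suffices; so $\mu$-injectivity plays no role in the paper's proof of this lemma. Your equality also lets you transfer mass from one side of $x_0$ to the other in the kink case, whereas the paper simply runs the inequality on $J_m^+$ and $J_m^-$ separately, which is cleaner. Two small points to tighten: your dichotomy ``$x_0\in\mathrm{int}(I_{i^*}^g)$ or $g$ has a local extremum at $x_0$'' omits the case $x_0\in\DD(g)$ with no change of monotonicity direction, though your one-sided-branch recipe covers it verbatim; and Dini's theorem is unnecessary, since monotonicity of each branch means $\gamma^n(V)$ is the interval spanned by the images of the two endpoints of $V$, which both converge to $x_0$.
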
 

\begin{proof}

	Let us assume that the system contracts a neighbourhood of $x_0$.\\ Let $J_m:=[x_0-\frac{1}{m}, x_0+\frac{1}{m}]\cap [0,1]$ and  
$$
J_m^+:=[x_0, x_0+\frac{1}{m}]\cap [0,1]\text{ and }J_m^-:=[x_0-\frac{1}{m}, x_0]\cap [0,1].
$$
Since all the maps in $\MM$ are piecewise monotone, we can write 
$$
\{g\, :\, g(x_0)=x_0\}=\bigcup_{m\in\mathbb{N}} \{g\, : g(x_0)=x_0, \,\,\, g\vert_{J_m^+}\text{ and }g\vert_{J_m^-}\text{  are monotone}\}.
$$ 
	Thus by continuity of measures, there is $m\in\mathbb{N}$ such that $\mu(\Delta_m)>0$ where  $\Delta_m$ is the set of $g\in\MM$ such that
	$$  
	g\vert_{J_m^+}\text{ and }g\vert_{J_m^-}\text{  are monotone}, g(x_0)=x_0
	$$
	and  
	$$
	|g(x)-g(x_0)|<|x-x_0|\quad\text{for all}\quad x\in J_m.
	$$
 We claim that $\nu(g^n J_m)\to 0$ for any $g\in \Delta_m$. In fact  we easily check that $ g^n J_m=[a_n,b_n]$ with $a_n=\min\{g^n(x_0),g^n(x_0+ \frac{1}{m}), g^n(x_0+ \frac{1}{m})\}$ and $b_n=\max\{g^n(x_0),g^n(x_0+ \frac{1}{m}), g^n(x_0+ \frac{1}{m})\}$, and $a_n$ and $b_n$ converge to $x_0$. Thus $\nu(g^n J_m)\to\nu(\{x_0\})=0$, by atomlessness of $\nu$.

 Suppose now that $\d_\nu g(x)= 1$ for $\nu$--a.e. $x\in[0,1]$ and $\mu$--a.e. $g\in\MM$. Then for all Borel set  $A\subseteq I_i^g$ we have
\begin{equation}\label{eq-nu-d=1}
	\nu(A)=\int_A \d_\nu g(x) \d\nu(x)= \int_A \frac{\d\nu_i^g}{\d\nu}(x)  \d\nu(x)\leq \nu_i^g(A)=\nu(g(A)).
\end{equation} 
 Since for every $g\in \Delta_m$, $g(J^+_m$) is contained in either $J^+_m$ or $J^-_m$, we can iterate (\ref{eq-nu-d=1}) obtaining
 $$
 \nu(J_m^+)\leq \nu(g(J_m^+))\leq \nu(g^n(J_m^+))\leq  \nu(g^n(J_m))\to 0.
 $$
 Thus $\nu(J_m^+)=0$ and similarly $\nu(J_m^-)=0$. Thus $\nu(J_m)=0$, which contradicts the hypothesis  that $x_0\in \supp \nu$.
\end{proof}

\begin{proof}[Proof of  Theorem~\ref{thm: mu inj implies pos h}]
	By Proposition~\ref{prop: h pos}, $\mu$--injectivity implies that either $h_{\mu}(\nu)>0$ or $\mu\otimes\nu(\{(g, x)\, : \, \d_{\nu}g(x)=1\})=1$. The latter case cannot occur due to  Lemma~\ref{lem: gnu}. Thus the entropy  $h_{\mu}(\nu)$ has to be strictly positive, which was to prove.

		If  $\nu$ is also ergodic the claim follows now by Theorem~\ref{thm: pos h}.
\end{proof}
%}

\section{Examples}\label{sec: 4}
\subsection{A criterion for unique ergodicity }
In this section, we present a criterion for the existence and uniqueness of a $\mu$--invariant probability measure.  This criterion, derived from our previous results, is not optimal but can be used to construct noninvertible uniquely ergodic SDSs.

\begin{cor}\label{cor-pract-cond}
	Let $\mu$ be a probability measure  with finite  support $\Gamma\subseteq\MM $ and let  $\Gamma^*$ the countable semigroup of $\MM$  generated by $\Gamma$. Suppose that the generated  SDS satisfies the following properties:
	\begin{itemize}
		\item The set $C:=\bigcap_{x\in (0,1)} \overline{\Gamma^*x}\subseteq[0,1]$
		is not empty, i.e.,  there is a point in the closure of all the orbits $\Gamma^*x$,  $x\in (0,1)$.
		\item  There exists $x_0\in C\cap (0,1)$ such the family of measures 
		$$\left\{\sum_{k=0}^n\pi^k(x_0,\cdot)/n\right\}_{n=1}^{\infty}
		$$
		 \textbf{is tight} in $(0,1)$.
		\item The SDS\textbf{ contracts a neighbourhood of $y_0\in C$.}
		%\item \textbf{repelled at the boundary} i.e. there exists $\phi_0$ and $\phi_1$ in $\Gamma^*$ such that $\phi_0(0)\in (0,1)$ and $\phi_1(1)\in (0,1)$
		%\item \textbf{contacting} i.e. there exist $\phi_0$  in the support of $\mu$ such that contract toward $x_0\in(0,1)$, that is $\lim_{n\to\infty}\phi^n(x)=x_0$ for all $x\in (0,1)$
		\item The system is \textbf{$\mu$--injective} in all  $x\in(0,1)$ 
		%\item  $\sum_{g\in \Gamma}\mu(g)n(g,0)<1$ and $\sum_{g\in \Gamma}\mu(g)n(g,1)<1$.
		%\item  \textbf{unbounded below} in the sense the for all $x\in(0,1)$ there exist $\phi\in \Gamma$ such that
	%	$\phi(x)<x$.
		\item Every $x\in(0,1)$ has \textbf{zero  or an infinity of preimages}, i.e.  $$
		\#\{y: g(y)=x \mbox{ for some } g\in \Gamma^*\}=\infty \text{ or } 0.$$
		
	\end{itemize}
Then there exits a unique  $\mu$--invariant probability measure   on $(0,1)$. This probability measure is atomless.   	
\end{cor}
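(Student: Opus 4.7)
The plan is three-fold: construct a $\mu$-invariant probability on $(0,1)$ via the tightness hypothesis, show any such measure must be atomless using $\mu$-injectivity together with the zero-or-infinitely-many-preimages assumption, and finally deduce uniqueness from Theorem \ref{intro thm: mu inj implies pos h}.

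For existence, the tightness hypothesis on $\sigma_n := \tfrac{1}{n}\sum_{k=0}^{n-1}\pi^k(x_0,\cdot)$ in $(0,1)$ combined with Prokhorov's theorem yields a subsequence converging weakly to some Borel probability $\nu$ with $\nu((0,1))=1$. Since $\Gamma$ is finite and each $g\in\Gamma$ is continuous, the transition kernel $\pi$ is Feller, so the standard Krylov--Bogolyubov argument makes $\nu$ $\mu$-invariant.

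For the atomless property, suppose an invariant $\nu$ on $(0,1)$ had an atom. Since atom masses sum to at most one, there are countably many atoms and an atom of maximal mass $a^*>0$ exists; let $A^*:=\{y:\nu(\{y\})=a^*\}$, a finite set. For $x^*\in A^*$, invariance and $\mu$-injectivity at $x^*\in(0,1)$ give
\[a^*=\int \nu(g^{-1}\{x^*\})\,\d\mu(g)\leq a^*\int n(g,x^*)\,\d\mu(g)\leq a^*,\]
so equality holds throughout. Hence for $\mu$-a.e.\ $g$ every preimage of $x^*$ under $g$ lies in $A^*$; iterating, the full $\Gamma^*$-preimage of $x^*$ is contained in $A^*$. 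The zero-or-infinite preimages hypothesis then forces this preimage set to be either empty (giving $a^*=0$, a contradiction) or infinite (contradicting finiteness of $A^*$), so $\nu$ is atomless.

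For uniqueness, observe first that for any $\mu$-invariant probability $\eta$ on $(0,1)$ the support $\supp(\eta)$ is closed and $\Gamma$-forward-invariant, so for any $y\in\supp(\eta)\cap(0,1)$ one has $\overline{\Gamma^* y}\subseteq\supp(\eta)$ and therefore $C\subseteq\supp(\eta)$; in particular $y_0\in\supp(\eta)$. By ergodic decomposition it suffices to show uniqueness among ergodic invariant probabilities on $(0,1)$. For two such $\nu_1,\nu_2$, both are atomless and contain the contracting point $y_0$ in their supports, and Theorem \ref{intro thm: mu inj implies pos h} applied to $\nu_1$ yields $\nu_1=\nu_2$ once $\supp(\nu_1)\subseteq\supp(\nu_2)$. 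The main obstacle is precisely this support comparison: I would close it by revisiting the contractivity/Birkhoff argument of Theorem \ref{thm: pos h}, producing a $\nu_1$-typical point $x\in\supp(\nu_2)$---such $x$ accumulate at $y_0\in\supp(\nu_1)\cap\supp(\nu_2)$---so that the contractivity intervals $I(x,\omega)$ have $\nu_2(I(x,\omega))>0$, and comparing the Birkhoff averages along $\omega$ for a separating Lipschitz $f$ forces $\int f\,\d\nu_1=\int f\,\d\nu_2$.
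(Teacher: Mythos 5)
Your existence step (Krylov--Bogolyubov via the Feller property and tightness in $(0,1)$) and your atomlessness step (maximal atom mass $a^*$, the finite set $A^*$, the equality forced by $\mu$-injectivity, and the clash with the zero-or-infinitely-many-preimages hypothesis) are essentially identical to the paper's. The problem is the uniqueness step, and it is precisely the point you flag yourself: you never establish the support inclusion $\supp(\nu_1)\subseteq\supp(\nu_2)$ needed to invoke Theorem \ref{thm: mu inj implies pos h}, and the repair you sketch does not close it. Knowing that $\nu_1$-typical points accumulate at $y_0\in\supp(\nu_1)\cap\supp(\nu_2)$ does not give you a $\nu_1$-typical point lying \emph{in} $\supp(\nu_2)$: a priori $\nu_1(\supp(\nu_2))$ could be $0$, and the contracting interval $I(x,\omega)$ around a typical $x$ merely close to $y_0$ can be far too small to meet $\supp(\nu_2)$, so there is no reason that $\nu_2(I(x,\omega))>0$. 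Comparing two arbitrary ergodic measures symmetrically is therefore a dead end here.

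The paper avoids this by not comparing two arbitrary ergodic measures, but by comparing everything to one specially built measure. The Ces\`aro limits of $\pi^k(x_0,\cdot)$ are all supported in $\overline{\Gamma^* x_0}$, so the tightness hypothesis produces an ergodic invariant $\nu_0$ with $\nu_0((0,1))=1$ and $\supp(\nu_0)\subseteq\overline{\Gamma^* x_0}$. On the other hand, for \emph{any} ergodic invariant $\eta$ with $\eta((0,1))=1$, the support is closed and forward $\Gamma^*$-invariant, hence contains $C$, hence contains $x_0$, hence contains $\overline{\Gamma^* x_0}\supseteq\supp(\nu_0)$ (and also $y_0\in C\subseteq\supp(\nu_0)$, so the contraction hypothesis applies to $\nu_0$). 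The inclusion $\supp(\nu_0)\subseteq\supp(\eta)$ is thus automatic, Theorem \ref{thm: mu inj implies pos h} gives $\eta=\nu_0$ for every ergodic $\eta$, and uniqueness follows by ergodic decomposition. That asymmetric choice of reference measure --- exploiting that the Krylov--Bogolyubov construction at $x_0\in C$ yields a measure with the \emph{smallest possible} support --- is the missing idea in your argument.
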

\begin{proof}

Since $\phi$ are continuous %and $\phi((0,1))\subseteq (0,1)$, 
the transition probability :
$$
\pi(x, A)=\int_{\MM} {\bf 1}_A(g(x))\, \d\mu(g)\qquad \mbox{ for $x\in\R$ and $A\in {\mathcal B}((0, 1))$}
$$ 		 
is a Feller operator on  $[0,1]$. Thus for any $x\in[0,1]$ all limit measure of the family $\{\sum_{k=0}^n\pi^k(x,\cdot)/n\}_n$ is  a $\mu$-invariant probability measure.
Further more since  $\{\sum_{k=0}^n\pi^k(x_0,\cdot)/n\}_n$ is tight in $(0,1)$ there exists an ergodic measure   $\nu_0$ do not charge $0$ and $1$, i.e.  $\nu_0((0,1))=1$, and such that $\supp\nu_0\subseteq \overline{\Gamma^* x_0}$.

We claim that under the hypothesis  any $\mu$-invariant portability $\nu$  such that $\nu((0,1))=1$ is atomless.  Suppose  $\nu$ has atoms  and let $M:=\sup_{y\in[0,1]}\nu(y)$ and $A:=\{x\in (0,1)| \nu(x)=M\}$. Since $\nu$ is a finite measure, $A$ is a finite set. Furthermore $\mu$-injectivity implies that $g^{-1}A\subsetneq A$ for all $g\in\Gamma^*$ and that any $a\in A$ has at least a pilgrimage. In fact let $a\in A$ then
$$M=\nu(a)=\sum_{g\in\Gamma}\mu(g)\sum_{y\in g^{-1}(a)}\nu(y)\leq \sum_{g\in\Gamma}\mu(g)M n(g,a)\leq M,$$
where the equality may hold only if $\nu(y)=M$ for all $y\in  g^{-1}(a)$ and $\sum_{g\in\Gamma}\mu(g) n(g,a)=1$.
This contradict the fact that  all $x\in(0,1)$ have zero or infinitely many preimage under $\Gamma^*$. 

Let $\nu$ be an ergodic invariant probability measure. 	
Observe  $\supp\nu$ is a closed $\Gamma^*$--invariant, i.e., $g(\supp\nu)\subseteq \supp\nu$ for all $g\in\Gamma^*$ \footnote{In fact, since
	$$1=\nu(\supp\nu)=\sum_{g\in\Gamma}\mu(g)\nu(g^{-1}\supp\nu),
	$$
	then $\nu(g^{-1}\supp\nu)=1$ for every $g\in\Gamma$, hence  $g^{-1}\supp\nu\supseteq\supp\nu$, by the fact that $g^{-1}\supp\nu$ is closed and $\supp\nu$ is minimal. Thus $\supp\nu \supseteq g(g^{-1}\supp\nu)\supseteq g(\supp\nu)$.}, thus  $C\subseteq \supp\nu$.  In particular  $x_0\in \supp\nu$ and $\overline{\Gamma^*x_0}\subseteq \supp\nu$.
Thus $\supp \nu_0\subseteq \supp \nu$ and since the condition  of Theorem \ref{thm: mu inj implies pos h} are satisfied then $\nu=\nu_0$.
\end{proof}

\begin{example} Consider the two  examples below, where the measure $\mu$ is equidistributed on the functions in each graphic 
\begin{center}

\includegraphics*[width=0.45\textwidth]{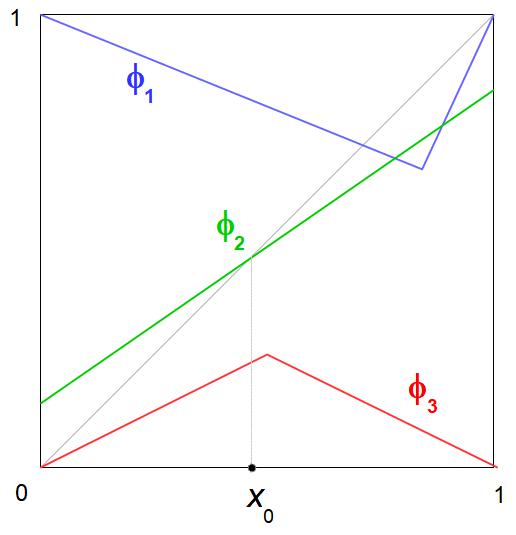}\quad \includegraphics*[width=0.45\textwidth]{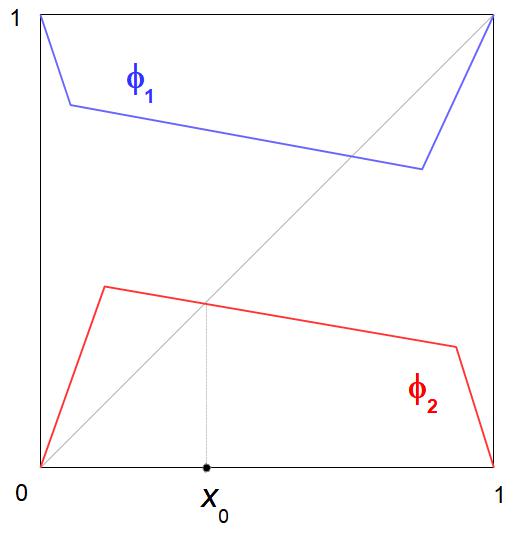}

\end{center}
 It can be checked that this two SDSs  satisfy the hypothesis of Corollary \ref{cor-pract-cond}, thus admit a unique invariant probability on $(0,1)$.

In both cases $x_0=y_0$ is in $C$ since $\lim_n\phi_2^n(x)=x_0$ for all $x\in(0,1)$. The family $\{\sum_{k=0}^n\pi^k(x_0,\cdot)/n\}_{n=1}^{\infty}$ is tight since the SDS is strongly repelled at $0$ and $1$ (see for instance 
\cite[Proposition 9.1]{ABB}) .
\end{example}

\subsection{Construction of SDS without unique ergodicity}\label{sec: ex}
Let us give here an example where unique ergodicity fails in the setting where 
 $\mu$--injectivity does not hold, but the systems is below diagonal on a neighbourhood of $0$ and above diagonal on a neighbourhood of $1$.

We start with a general observation and construction.

First observe that every semigroup of continuous functions acts on the set of closed subintervals of $[0,1]$ by the map $I\mapsto g(I)$

 Let $\JJ$ denote some family of closed subintervals of $[0, 1]$. Let $\mu$ be a Borel probability measure on $\MM$ such that $g\JJ\subseteq \JJ$  for $\mu$--almost every  $g\in\MM$.  We are given a family $\{\nu_I: I\in \JJ\}$ of Borel probability measures on $[0, 1]$. We assume that this family is equinvariant in the sense that   
\begin{equation}\label{eq-equiinv-nuI}
\nu_{g(I)}=g\nu_I\qquad\text{$\bar\nu\otimes\mu$--a.s.}
\end{equation}
and such that $I\mapsto \nu_{I}(A)$ is measurable for all $A\in\mathcal{B}([0,1])$.

We start with a simple lemma. 

\begin{lem}\label{lemma nu} Let $(\JJ, \Xi, \bar\nu)$  be a  measure space. Assume that $\bar\nu$ is an $\mu$--invariant probability, i.e., it satisfies
\begin{equation}\label{e1_16.06.23}
\int_{\JJ\times \Gamma} f(g(I))  \d\bar\nu(I))\, \d\mu(g)=\bar\nu(f(I)) \text{ for any  } f\in L^\infty(\JJ, \Xi, \bar\nu).
\end{equation}
Then the measure
$$
\nu(A):=\int_{\JJ}\nu_I(A)\, \d\bar\nu(I)\quad\text{for any Borel } A\subset[0,1]
$$
is a $\mu$--invariant measure on $[0,1]$.

\end{lem}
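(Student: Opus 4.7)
The plan is to verify the $\mu$-invariance identity $\nu(A)=\int_{\MM}\nu(g^{-1}A)\,\d\mu(g)$ directly, by unwinding the definition of $\nu$, swapping integrals by Fubini, and applying the hypothesis on $\bar\nu$ with a cleverly chosen bounded test function. First I would check that $\nu$ is well defined as a Borel probability on $[0,1]$: measurability of $I\mapsto\nu_I(A)$ for each Borel $A$ is assumed, and $\sigma$-additivity together with $\nu([0,1])=1$ follow from monotone convergence and Fubini, since each $\nu_I$ is itself a probability.

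Next, fix a Borel set $A\subseteq[0,1]$ and compute, using Fubini (which is legitimate because the integrand is bounded by $1$):
\begin{equation*}
\int_{\MM}\nu(g^{-1}A)\,\d\mu(g)=\int_{\MM}\int_{\JJ}\nu_I(g^{-1}A)\,\d\bar\nu(I)\,\d\mu(g)=\int_{\MM\times\JJ}g\nu_I(A)\,\d(\mu\otimes\bar\nu)(g,I),
\end{equation*}
where in the last step I used the definition of the pushforward $g\nu_I(A)=\nu_I(g^{-1}A)$. The equivariance hypothesis~(\ref{eq-equiinv-nuI}) now rewrites the integrand as $\nu_{g(I)}(A)$, for $\mu\otimes\bar\nu$-almost every $(g,I)$.

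At this point the key step is to apply the invariance relation~(\ref{e1_16.06.23}) to the function $f(I):=\nu_I(A)$. This $f$ belongs to $L^\infty(\JJ,\Xi,\bar\nu)$ because $0\le\nu_I(A)\le 1$ and by hypothesis $I\mapsto\nu_I(A)$ is measurable. Invariance gives
\begin{equation*}
\int_{\MM\times\JJ}\nu_{g(I)}(A)\,\d\mu(g)\,\d\bar\nu(I)=\int_{\JJ}\nu_I(A)\,\d\bar\nu(I)=\nu(A),
\end{equation*}
which is exactly what was to be shown.

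I do not anticipate a serious obstacle here: the argument is essentially bookkeeping. The only point worth watching is the joint measurability of $(g,I)\mapsto\nu_I(g^{-1}A)$ needed to justify Fubini and to make sense of the equivariance statement $\mu\otimes\bar\nu$-almost surely; this follows from the hypothesis that $I\mapsto\nu_I(\cdot)$ is measurable combined with standard monotone-class arguments (first for $A$ open, then extended to all Borel sets).
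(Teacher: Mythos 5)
Your proof is correct and follows essentially the same route as the paper's: unwind the definition of $\nu$, use the equivariance $\nu_I(g^{-1}A)=\nu_{g(I)}(A)$, and apply the invariance relation \eqref{e1_16.06.23} to the test function $f(I)=\nu_I(A)$. The extra remarks on joint measurability and Fubini are reasonable bookkeeping that the paper leaves implicit.
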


\begin{proof} For  $A\in\mathcal B([0, 1])$ we have
$$
\begin{aligned}
&\int_{\MM}\nu(g^{-1}(A))\, \d\mu(\d g)=
\int_{\MM}\left(\int_\JJ \nu_I(g^{-1}(A)) \, \d\bar\nu( I)\right)\, \d\mu(g)\\
&=\int_{\MM}\left(\int_\JJ \nu_{g(I)}(A)\, \d \bar\nu(I)\right)\, \d\mu( g)
\stackrel{\eqref{e1_16.06.23}}{=} \int_\JJ \nu_I(A)\, \d\bar\nu( I)=\nu(A),
\end{aligned}
$$  
which ends the proof.
\end{proof}

We are now in a position to give a concrete example of an SDS where unique ergodicity fails, i.e., we will construct two different  $\mu$--invariant probability measures $\nu_1$, $\nu_2$.  On $[0,1]$ consider the following three continuous maps 
$$
\phi_1(x):=
\begin{cases}
3x&\text{ if } x\in[0,\frac{1}{3}],\\
-3x+2 &\text{ if } x\in(\frac{1}{3}, \frac{2}{3}],\\
3x-2 &\text{ if } x\in(\frac{2}{3}, 1],
\end{cases} \quad \ \ \ \phi_2(x):=\frac{x}{3}   \quad\text{ and } \quad \phi_3(x):=\frac{x}{3}+\frac{2}{3}.
$$

\begin{center}
\begin{tikzpicture}
\draw (0,0) rectangle (3,3);
%\draw (0,0) -- (0,3) -- (3,3);
%\draw (0,0) -- (3,0) -- (3,3);
\draw (0,0) -- (1,3) -- (2,0) -- (3,3);
\draw (0,0) node[anchor=north]{0};
\draw (3,0) node[anchor=north]{1};
\draw (0,3) node[anchor=east]{1};
\draw (1,-0.05)--(1,0.05) node[anchor=north]{$\frac{1}{3}$};
\draw (2,-0.05)--(2,0.05) node[anchor=north]{$\frac{2}{3}$};
\draw (1.5,2.5) node[anchor=north]{$\phi_1$};
\end{tikzpicture}
\qquad
 \begin{tikzpicture}
\draw (0,0) rectangle (3,3);
\draw (0,0) -- (3,1);
\draw (0,0) node[anchor=north]{0};
\draw (3,0) node[anchor=north]{1};
\draw (0,3) node[anchor=east]{1};
%\draw (1,-0.05)--(1,0.05) node[anchor=north]{$\frac{1}{3}$};
%\draw (2,-0.05)--(2,0.05) node[anchor=north]{$\frac{2}{3}$};
\draw (-0.05,1)--(0.05,1) node[anchor=east]{$\frac{1}{3}$};
\draw (1.5,1.1) node[anchor=north]{$\phi_2$};
\end{tikzpicture}\qquad
\begin{tikzpicture}
\draw (0,0) rectangle (3,3);
\draw (0,2) -- (3,3) ;
\draw (0,0) node[anchor=north]{0};
\draw (3,0) node[anchor=north]{1};
\draw (0,3) node[anchor=east]{1};
\draw (-0.05,2)--(0.05,2) node[anchor=east]{$\frac{2}{3}$};
\draw (1.5,2.5) node[anchor=north]{$\phi_3$};
\end{tikzpicture}
\end{center}

\begin{prop}\label{prop ex}
Let $\mu$ be a probability measure on $\MM$ such that    $\mu(\phi_1)=p$, $\mu(\phi_2)=\frac{1-p}{2}=\mu(\phi_3)$ for some $p\in (\frac{1}{2},1)$. Then there exist at least two different $\mu$--invariant ergodic %$\mu$--invariant
probability measures $\nu_1,\nu_2$ on $(0,1)$. % with $\supp(\nu_1)=[0,1]$ and $\supp(\nu_2)=\partial \mathcal{C}$, the Cantor-$\frac{1}{3}$-set.
\end{prop}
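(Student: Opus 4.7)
The strategy is to exhibit two mutually singular $\mu$-invariant probability measures on $(0,1)$---the classical Bernoulli Cantor measure $\nu_1$ on the middle-thirds set $K$, and an absolutely continuous measure $\nu_2$ produced by Lemma~\ref{lemma nu}---and then invoke the ergodic decomposition. Since $\nu_1\perp\nu_2$ they are distinct, and since both are atomless they charge no mass at $\{0,1\}$; their ergodic decompositions therefore contain at least two distinct atomless ergodic $\mu$-invariant probability measures, all supported in $(0,1)$.

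\emph{The Cantor measure.} The set $K$ is forward-invariant under each of $\phi_1,\phi_2,\phi_3$: by Hutchinson self-similarity $\phi_2(K)=K\cap[0,1/3]$ and $\phi_3(K)=K\cap[2/3,1]$; and since $K\cap(1/3,2/3)=\emptyset$, on $K\cap[0,1/3]$ the branch $\phi_1(x)=3x$ coincides with $\phi_2^{-1}$ while on $K\cap[2/3,1]$ the branch $\phi_1(x)=3x-2$ coincides with $\phi_3^{-1}$, so $\phi_1(K)\subseteq K$ (in the ternary $\{0,2\}$-coding of $K$, $\phi_1|_K$ is simply the Bernoulli shift). The standard self-similar measure $\nu_1$ on $K$ with weights $(1/2,1/2)$ then satisfies $\phi_2\nu_1+\phi_3\nu_1=2\nu_1$ and $\phi_1\nu_1=\nu_1$ (shift-invariance of the Bernoulli product), so
\[
    p\,\phi_1\nu_1+\tfrac{1-p}{2}\bigl(\phi_2\nu_1+\phi_3\nu_1\bigr)=p\nu_1+(1-p)\nu_1=\nu_1,
\]
i.e.\ $\nu_1$ is $\mu$-invariant.

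\emph{The absolutely continuous measure.} Let $\JJ:=\{\phi_{i_1}\!\circ\!\cdots\!\circ\!\phi_{i_n}([0,1]):n\ge 0,\ i_j\in\{2,3\}\}$ be the family of basic Cantor intervals, indexed by the ``level'' $n$ (so at level $n$ there are $2^n$ intervals of length $3^{-n}$, and for $n\ge 1$ each such interval lies in $[0,1/3]\cup[2/3,1]$). The family $\JJ$ is closed under $I\mapsto g(I)$: $\phi_2,\phi_3$ raise the level by one; for $n\ge 1$ the map $\phi_1$ is affine on any level-$n$ interval and carries it to its level-$(n-1)$ parent; and $\phi_1([0,1])=[0,1]$. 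The induced Markov chain on $\JJ$ reduces, by the obvious binary-tree symmetry, to a reflected birth-and-death chain on $\N$ with up-probability $1-p$ and down-probability $p$; since $p>1/2$ it is positive recurrent with stationary distribution
\[
    \bar\nu(I)=(1-s)\,(s/2)^n \quad\text{on a level-$n$ interval}, \qquad s:=\tfrac{1-p}{p}\in(0,1).
\]
Put $\nu_I:=\Leb|_I/|I|$. The equivariance $\nu_{g(I)}=g\nu_I$ is immediate for $g\in\{\phi_2,\phi_3\}$ and for $g=\phi_1$ acting on a level-$\ge 1$ interval (all three maps are affine there, so pushforward of uniform is uniform); for $g=\phi_1$ on $[0,1]$ it reduces to the classical Lebesgue-invariance of the symmetric three-piece tent map, by a direct change of variables on each of its affine pieces. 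Lemma~\ref{lemma nu} then produces the $\mu$-invariant probability
\[
    \nu_2(A)=\int_{\JJ}\nu_I(A)\,\d\bar\nu(I)=(1-s)\sum_{n=0}^{\infty}(3s/2)^{n}\,|A\cap L_n|,\qquad L_n:=\!\!\!\bigcup_{I\in\JJ,\,\mathrm{level}(I)=n}\!\!\!I,
\]
with Lebesgue density $\rho(x)=(1-s)\sum_{n\ge 0}(3s/2)^n\mathbf{1}_{L_n}(x)$. Thus $\nu_2\ll\Leb$, whereas $\nu_1$ is carried by the Lebesgue-null set $K$; the two measures are mutually singular, and the proof concludes.

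\emph{Main obstacle.} Everything rests on two technical points: first, identifying $\JJ$ as precisely the right $\Gamma$-invariant family---this uses the specific alignment of $\phi_1$ as the ``expanding inverse'' of the contracting IFS $\{\phi_2,\phi_3\}$ on the two outer thirds, together with the fact that the full tent $\phi_1$ preserves $\Leb$ on $[0,1]$; second, ensuring positive recurrence of the level random walk, for which the assumption $p>1/2$ is exactly what makes $\bar\nu$ a finite probability measure.
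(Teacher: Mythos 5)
Your proof is correct and follows essentially the same route as the paper: your absolutely continuous measure is exactly the paper's $\nu_1$ (your stationary weights $(1-s)(s/2)^n$ with $s=\frac{1-p}{p}$ coincide with the paper's $c\,(a/2)^{\mathrm{lev}(I)}$, since $a=\frac{1-\sqrt{(2p-1)^2}}{2p}=\frac{1-p}{p}$), and your second measure, the plain Cantor measure $\eta$, is in fact the very same measure as the paper's $\nu_2=\sum_I\bar\nu(I)\,\tilde\nu_I$, whose density with respect to $\eta$ collapses to $(1-s)\sum_n (s/2)^n 2^n\equiv 1$, so your direct verification of $\mu$-invariance via self-similarity and shift-invariance of the Bernoulli coding is a clean shortcut past the paper's $\tilde\nu_I$ computation. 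The only blemishes are cosmetic: $\phi_1$ sends a level-$n$ Cantor interval to \emph{some} level-$(n-1)$ interval, not always its parent (immaterial, since only the preimage counts enter the stationarity equation, namely two $\phi_1$-preimages at level $n+1$ and one $\phi_2$- or $\phi_3$-preimage at level $n-1$), and the adjective ``atomless'' attached to the ergodic components is not justified by your argument --- but it is also not needed, since mutual singularity of the two measures already forces their ergodic decompositions to produce two distinct ergodic invariant measures charging neither $0$ nor $1$.
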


\begin{remark} Observe that the system is below diagonal on a neighbourhood of $0$ and above diagonal on a 
neighbourhood of $1$, but $\mu$--injectivity does not hold, because $\int_{\MM}n(g,x)\, d\mu(g)\geq 3 p >1$ for all $x\neq 0,1$.
\end{remark}

\begin{proof}[Proof of Proposition~\ref{prop ex}]
Let $$K_n:=\{k\in\N: k=k_0+k_13+\cdots +k_{n-1}3^{n-1} \text{ with } k_i=0\text{ or }  2\}.$$	
Consider the collection of all subintervals of $[0,1]$ which are used in the construction of the Cantor--$\frac{1}{3}$-set, i.e.,
$$\mathcal{C}:=%\{[0,1]\ , \ [0,\frac{1}{3}]\ , \ [\frac{2}{3}, 1]\ , \ [0,\frac{1}{9}] \  , \ [\frac{2}{9}, \frac{3}{9}] \ , \ [\frac{2}{3}, \frac{7}{9}] \ , \ [\frac{8}{9}, 1] \ , \ldots\}
\Big\{\Big[\frac{k}{3^{n}}, \frac{k+1}{3^{n}}\Big]\ : \ k\in K_n\ , \ \forall n\in\mathbb{N}\Big\}.
$$
Observe that $\phi_i(\mathcal{C})\subseteq \mathcal{C}$ for $i=1,2,3$, hence $\MM$ acts as a semi-group on the discrete set $\mathcal{C}$. Moreover, we can define a $\mu$-invariant probability measure $\bar\nu$ on $(\mathcal{C}, \mathcal{P}(\mathcal{C}))$  - where $\mathcal{P}(\mathcal{C})$ denotes the power set of $\mathcal{C}$ - as follows: 
$$\bar\nu(I):=c\Big(\frac{a}{2}\Big)^{\lev(I)}
\qquad \text{ for } I\in\mathcal{C},
$$
where $\lev(I)$ denotes the level of the interval in the Cantor-construction (i.e. $\lev([\frac{k}{3^n}, \frac{k+1}{3^n}]):=n$) and 
$a:=\frac{1-\sqrt{1-4p(1-p)}}{2p}$ and $c:=\big(\sum_{n=0}^{\infty} {a}^n \big)^{-1} 
$. Note that the sum in the latter expression is finite due to the choice of $p$.
Let us verify that $\bar\nu$ is $\mu$-invariant:
$$
\begin{aligned}
p\bar\nu(\phi_1^{-1}(I))+\frac{1-p}{2}\bar\nu(\phi_2^{-1}(I))+\frac{1-p}{2}\bar\nu(\phi_3^{-1}(I)) &
\overset{?}{=}
\bar\nu(I)
\\
\iff \qquad
2p\Big(\frac{a}{2}\Big)^{\lev(I)+1 } +\frac{1-p}{2} \Big(\frac{a}{2}\Big)^{\lev(I)-1} %-%I can eother only be in [0,1/3] or [2/3,1].
&\overset{?}{=}\Big(\frac{a}{2}\Big)^{\lev(I)},
\end{aligned}
$$
which is satisfied by the choice of a.

Now, using the construction from Lemma~\ref{lemma nu} for the above measure $\bar\nu$, we will construct two different $\mu$--invariant ergodic probability measures on $[0,1]$. First, let us consider the uniform probability measures $\nu_I$ on $I$ given by $$\d\nu_I:=\frac{1}{\vert I\vert } {\bf 1}_{I}(x)\, \d x, \qquad \text{ for } I\in \mathcal{C}.
$$
Observe that  
$$ \nu_{\phi_i(I)}= \phi_i \nu_I, \qquad \forall i=1,2,3.$$ 
Indeed, for $f$ being a continuous map on $[0,1]$, we have
$$
\begin{aligned}
\nu_{\phi_1(I)}(f)&=\int_{[0, 1]} f \, \d\nu_{\phi_1(I)}=\frac{1}{3\vert I \vert}\int_{\phi_1(I)} f(x) \, \d x\\
&=
\frac{1}{3\vert I \vert}\int_{I} f(\phi_1(y)) \, 3 \d y=\nu_{I}(\phi_1^{-1}(f))
\end{aligned}
$$
and similarly for $\phi_2, \phi_3$.
Therefore, we can apply Lemma~\ref{lemma nu} and obtain that the measure $$\nu_1:=\sum_{I\in \mathcal{C}} \bar\nu(I) \nu_I
$$ is $\mu$--invariant. Moreover, $\nu_1$ is absolutely continuous with respect to the Lebesgue measure, $\nu_1 \ll \leb$, and $\supp(\nu)=[0,1]$.

In order to construct a second (different) $\mu$--invariant probability measure $\nu_2$ on $(0,1)$, let us first consider the Cantor measure $\eta$ on $[0,1]$, i.e. the unique probability measure such that $\eta=\frac{1}{2}\eta\circ \phi_2^{-1} +\frac{1}{2}\eta\circ \phi_3^{-1}$, see for instance \cite{LM}. % It is precisely the uniform measure on each level in the Cantor-set construction.
 In particular, $\eta([\frac{k}{3^n}, \frac{k+1}{3^n}])=2^{-n}$ for $k\in K_n$.

 We set $$\widetilde{\nu}_I:=\frac{\eta( \cdot \cap I)}{\eta(I)} \qquad \text{ for } I\in \mathcal{C}.$$
Again, \begin{equation}\label{equiv eta} \widetilde{\nu}_{\phi_i(I)}=\phi_i\widetilde{\nu}_I \qquad \text{ for }i=1,2,3.\end{equation} Indeed, for $I\neq [0,1]$,
 $I\in\mathcal{C}$, we obtain for all $A\in\mathcal{B}[0,1]$ 
$$
\begin{aligned}
 \widetilde{\nu}_{\phi_1(I)}(A)= &\frac{\eta( A \cap \phi_1(I))}{\eta(\phi_1(I))}=\frac{2^{-1}\eta(A\cap\phi_1( I))}{\eta(I)}=
\frac{\eta( \phi_1^{-1}(A \cap \phi_1(I))\cap[0,\frac{1}{3}])}{\eta(I)}
\\=& 
\frac{\eta( \phi_1^{-1}(A) \cap I)}{\eta(I)}=\widetilde{\nu}_I(\phi_1^{-1}(A))=\phi_1\widetilde{\nu}_I(A),
\end{aligned}
$$
using that either $I\subseteq [0,\frac{1}{3}]$ or $I\subseteq [\frac{2}{3},1]$ that the size of $\phi^{-1}(A)$ is equally distributed among each of the intervals.
Now, if $I=[0,1]$ then $ \widetilde{\nu}_{I}(\phi_1^{-1}(A))=\eta(\phi_1^{-1}(A))= 2\frac{1}{2}\eta(A)=\eta(A)= \widetilde{\nu}_{\phi_1(I)}(A)$.
Further,  since $\phi_j$  is injective for $j=2,3$ we see that for all $I\in \mathcal{C}$,
$$
\begin{aligned} 
\widetilde{\nu}_{\phi_j(I)}(A)&= \frac{\eta( A \cap \phi_j(I))}{\eta(\phi_j(I))}= \frac{2\eta( A \cap \phi_j(I))}{\eta(I)}\\
&= \frac{\eta( \phi_j^{-1}(A \cap \phi_j(I)))}{\eta(I)}=\widetilde{\nu}_I(\phi_j^{-1}(A)),
\end{aligned}
$$ 
which proves (\ref{equiv eta}).

Thus, by Lemma~\ref{lemma nu},  
$$\nu_2:=\sum_{I\in\mathcal{C}}\bar\nu(I)\widetilde{\nu}_I
$$ is a $\mu$--invariant probability measure, which by construction is absolutely continuous with respect to the Cantor measure $\eta$ and $\supp(\nu_2)=\partial \mathcal{C}$ is the Cantor-$\frac{1}{3}$-set. %, i.e. the ``limit set'' of $\mathcal{C}$.
Moreover, $\nu_2$ is atomless, thus we can view it as a probability measure on $(0,1)$.
 The $\mu$--invariant probability measures $\nu_1$, $\nu_2$ itself might not be ergodic, but we obtain the existence of two different $\mu$--invariant ergodic Borel probability measures by 
 Rohklin's ergodic decomposition theorem.
 \end{proof}

\section{Appendix}
\subsection{Proof of Proposition \ref{prop-RN-deri-NC-int}}
\begin{myprop}{\ref{prop-RN-deri-NC-int}}
	Let $\lambda$ be a finite Borel measure, and let $\nu$ be a Borel probability measure on the interval $[0,1]$. 
	\begin{enumerate}
		\item For $\nu$--almost every $x\in[0,1]$ we have 
		$$
		\frac{\d\lambda}{\d\nu}(x)=\lim_{\delta\to 0}\sup_{I\in \JJ^x,\, |I|<\delta}\frac{\lambda(I)}{\nu(I)} \quad .
		$$
		\item Let $Q^*_\nu(\lambda,x):=\sup_{I\in \JJ^x} \frac{\lambda(I)}{\nu(I)}$. Then 
		$$\int  \ln^+ Q^*_\nu(\lambda,x)\mathrm{d}\nu(x)\leq 2\lambda([0,1]).$$
	\end{enumerate}	
\end{myprop}
The first part of the  proposition is a consequence of the following version of the non-centered Vitali covering Theorem for a general finite measure on the interval:
\begin{lem}\label{lem-vitali}
	Let $\nu$ be a finite regular measure on $[0,1]$, and let $A\subseteq [0,1]$ be a Borel set. Let $\JJ$ be a family of closed interval such that 
	$$
	\inf\left\{|I|: I\in\JJ \mbox{ and } x\in \mathrm{int}(I)\right\}=0\quad \forall x\in A.
	$$
	Then there exists a countable family $\{I_i\}\subset\JJ$ of disjoint intervals such that $\nu(A\setminus\bigcup_iI_i)=0$.
	 
\end{lem}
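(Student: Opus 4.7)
The plan is to establish this Vitali-type covering lemma by a classical greedy selection, carefully adapted to a general finite regular Borel measure $\nu$ (which need not be doubling) and intervals that are not centered at the points they cover.

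First I would use the regularity of $\nu$ to reduce to a convenient setting: by inner regularity it suffices to treat $A$ compact, and by outer regularity, for a fixed $\epsilon > 0$, one can restrict $\JJ$ to the subfamily of intervals contained in a small open neighborhood $U \supset A$ with $\nu(U) \le \nu(A) + \epsilon$, which is still a fine cover of $A$. Then I would perform the standard greedy selection by Lebesgue length: having chosen disjoint $I_1, \dots, I_{n-1}$, set $\JJ_n := \{I \in \JJ : I \cap (I_1 \cup \cdots \cup I_{n-1}) = \emptyset\}$ and pick $I_n \in \JJ_n$ with $|I_n| \ge \tfrac12 \sup_{I \in \JJ_n} |I|$. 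The intervals $I_n$ are pairwise disjoint in $[0,1]$, so $\sum_n |I_n| \le 1$, and in particular $|I_n| \to 0$.

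The next step is the classical 1D tail-covering: for any $x \in A \setminus \bigcup_n I_n$ and any $n$, the complement of $I_1 \cup \cdots \cup I_{n-1}$ is open, so the fine cover hypothesis yields some $I \in \JJ_n$ with $x \in \mathrm{int}(I)$. Since $|I_m| \to 0$, this $I$ must intersect some $I_m$ with $m \ge n$; taking the least such $m$ gives $I \in \JJ_m$ hence $|I| \le 2|I_m|$, and the one-dimensional geometry then forces $I \subset 5 I_m$ (the fivefold centered enlargement of $I_m$). Thus $A \setminus \bigcup_n I_n$ lies inside $\limsup_m 5 I_m$, which is Lebesgue-null by Borel--Cantelli.

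The main obstacle is to upgrade this Lebesgue-null conclusion to a $\nu$-null one, since $\nu$ may carry a singular part and the fivefold enlargement $5I_m$ has no direct measure bound in terms of $\nu(I_m)$. To overcome this I would iterate. One pass of the construction captures a definite positive fraction $c > 0$ of $\nu(A)$: indeed, a 1D Besicovitch-type refinement extracts from the fine cover a countable subcover of $A$ with absolutely bounded multiplicity, which splits into finitely many disjoint subfamilies, the largest of which carries a uniform proportion of $\nu(A)$. Running the same argument on the residual $A \setminus \bigcup_n I_n$ (which still admits a fine cover, obtained by shrinking intervals to lie in the open complement of the already-selected family) and accumulating the selected intervals across iterations, the uncovered part of $A$ has $\nu$-measure at most $(1-c)^k \nu(A) \to 0$. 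The delicate bookkeeping — preserving the fine cover property on each residual and verifying the uniform fraction $c$ at every stage — is the core technical difficulty, and is where the non-doubling character of $\nu$ is properly circumvented by the one-dimensional structure.
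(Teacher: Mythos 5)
Your operative argument --- capture a fixed positive fraction of $\nu(A)$ by passing to a finite subcover of a compact core, refining it to bounded multiplicity, splitting into finitely many disjoint subfamilies and keeping the best one, then iterating on the open complement of the selected intervals so the residual measure decays geometrically --- is exactly the paper's proof (its Steps 1--3, with fraction $\tfrac12$ on the compact part and $\tfrac34$ after the inner-regularity loss). The greedy-by-length $5r$-covering detour in your first half is correctly abandoned as insufficient for non-doubling $\nu$, and the rest is sound, so this is essentially the same approach.
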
	
\begin{proof}
\textbf{Step 1.} Let $K$ be a compact set, and let $U\supset K$ be open in $[0,1]$. We claim that there exists a finite family of disjoint intervals $\{I_i\}\subset \JJ$ such that $I_i\subset U$ and $\nu(K\setminus\bigcup_iI_i)\leq \frac12\nu(K)$.

In fact, since the intervals of $\JJ$ included in $U$ form a covering of the compact set $K$,  we can find a finite covering family $\{\hat I_j\}_{j=1}^N$. Passing to a subfamily if necessary, we can assume that every interval in this family intersects at most two other interval. Additionally, we may assume that
$$
\hat I_i\cap \hat I_j=\emptyset \mbox{  if } j\neq i\pm 1.
$$
Consider the family   $I'_i=\hat I_{2i}$ for $i=1\ldots \lfloor N/2\rfloor$  and $I''_i=\hat I_{2i+1}$ for $i=1\ldots \lfloor N/2\rfloor$. Then $\II'$ and $\II''$ are disjoint families and since their union covers $K$, either $\nu(K\setminus\bigcup_iI'_i)< \frac12 \nu(K)$ or $\nu(K\setminus\bigcup_iI''_i)< \frac12 \nu(K)$.   

\textbf{Step 2.} For any $A\subset U\subseteq [0,1]$ we claim that there exists  a finite family of disjoint intervals $\II=\{I_i\}\subset \JJ$ such that $I_i\subset U$ and $\nu(A\setminus\bigcup_iI_i)< \frac34 \nu(A)$.

In fact, there exist a compact  $K\subset A$ such that $\nu(A\setminus K) <\frac14 \nu(A)$. Appling step 1, we have a finite disjoint family of intervals such that
$$
\nu(A\setminus\bigcup_iI_i))\leq \nu(K\setminus\bigcup_iI_i))+ \nu(A\setminus K) \leq \frac12\nu(K)+\frac14 \nu(A)\leq \frac34 \nu(A).
$$
\textbf{Step 3.} We prove by induction that for any $n\in \N$  there exists  a finite family of disjoint intervals $\II^n:=\{I_i^n\}\subset \JJ$ such that $I_i^n\subset U$ and 
\begin{equation}\label{eq-vit-ind}
\nu(A\setminus\bigcup_iI_i^n)< \left(\frac34\right)^n \nu(A).
\end{equation}

Step 2 give a proof for $n=1$. Suppose that (\ref{eq-vit-ind}) holds for $n$. Take $A_n=A\setminus\bigcup_iI^n_i$ and $U_n=[0,1]\setminus\bigcup_iI^n_i$. Then, by step 2, there exists $\hat \JJ^n=\{J_i^n\}\subset \JJ$ such that $I_i^n\subset U_n$ (therefore the intervals from $\JJ^n$ are disjoint from the intervals  $\II^n$) and $\nu(A_n\setminus\bigcup_i J_i^n)< \frac34 \nu(A_n)$. 
Setting $\II^{n+1}=\JJ^n\cup \II^n$ we easily check (\ref{eq-vit-ind}) for $n+1$. 

Finally, set $ \II:=\cup_n\II^n$. We conclude the proof by taking the limit in (\ref{eq-vit-ind}) for $n\to \infty$.
\end{proof}
\begin{proof}[Proof of Proposition {\ref{prop-RN-deri-NC-int}}]
	(1). We refer to Federer's book   \cite{Federer1996}. Indeed,
Lemma \ref{lem-vitali} proves that $V=\{(x, I): I \mbox{ a closed interval and } x\in\mathrm{int}(I)\}$ are a $\nu$--Vitali relation according to \cite[Definition 2.8.16.]{Federer1996} 
	Let $$D(\lambda,\nu,x):=\lim_{\delta\to 0}\sup_{I\in \II_\delta^x}\frac{\lambda(I)}{\nu(I)}.
	$$
 By \cite[Theorem 2.9.7]{Federer1996}, $D(\lambda,\nu,x)d\nu(x)$ is an absolutely continuous part in the Lebesgue decomposition of $\lambda$, thus $D(\lambda,\nu,x)=\frac{\d\lambda}{\d\nu}(x)$.
	
	(2) Consider the set:
	$$
	A_a:=\left\{x\in [0,1]: Q^*(\lambda,\nu,x)>a\right\}.
	$$
	For every $x\in A_a$ choose a closed interval $I_x$ such that $x\in\mathrm{int}(I_x)$ and $\lambda(I_x)>a\mu(I_x)$. It is possible to extract from the family $\{I_x\}_{x\in A_a}$ a finite covering $\{I_i\}_i$ of $A_a$ such that every interval may overlaps with at most one other interval at each point,  that is
	$$
	\mathbf{1}_{A_a} \leq \sum_{i} \mathbf{1}_{I_i} \leq 2.
	$$    
	(This is Besicovitch's covering Theorem in dimension 1.) Then
	$$\nu(A_a)\leq \sum_{i} \nu(I_i)\leq\sum_{i} \lambda(I_i)/a\leq\int_{[0, 1]}\sum_{i} \mathbf{1}_{I_i}/a\,\d\lambda \leq 2\lambda([0,1])/a.
	$$
	 Now by the Fubini theorem we easily see that
\begin{align*}
		\int_{[0, 1]} \ln^+Q^*(\lambda,\nu,x)\nu(\d x)
		&=\int_{[0, 1]} \int_1^{\infty} \mathbf{1}_{[Q^*(\lambda,\nu,x)>a]} \frac{\d a}{a}\nu(\d x)= \int_1^{\infty}\nu(A_a)\frac{\d a}{a}\\
		&\leq  2\lambda([0,1]) \int_1^{\infty}\frac{\d a}{a^2} = 2\lambda([0,1]) .
\end{align*}
This completes the proof.
\end{proof}	

\subsection{Proof of exponential contraction}
\begin{myprop}{\ref{contractivity}}[Contractivity]
Let $\mu$ be a Borel probability measure on $\MM $, and let $\nu$ be a $\mu$--invariant atomless ergodic Borel probability measure on the interval $[0,1]$ such that $\ln^+ J$ is $\nu\times\mu$--integrable and  
	$h_{\mu}(\nu)>0$.
	Then for every ${h}\in (0, h_{\mu}(\nu))$ and
	$\mu^{\mathbb{N}}$-a.e. $\omega=(g_n)_{n\in\mathbb{N}}\in \MM^{\mathbb{N}}$ and $\nu$--a.e. $x\in [0,1]$ there exists a closed interval $I=I(\omega,x)$ with $x\in \mathrm{int}(I)$ such that
	$$
	|g_n\circ \ldots \circ g_1(I)|\leq \exp(-n\cdot {h}), \quad \forall n\in\mathbb{N}.
	$$
\end{myprop}
Define
 $$
 \begin{aligned}
&J_{\epsilon}(x,g)\\
&:=\sup\left\{\frac{\nu(g (I))}{\nu(I)}\, : \, I\subset [0, 1] \text{ closed interval}, x\in \mathrm{int}(I) \text{ and }\nu(I)\leq \epsilon \right\}.
\end{aligned}
$$
 \begin{lem} Suppose that $\nu$ is an atomless probability measure and $g\in\MM $. Then we have
 		\begin{equation}\label{eq-conv-J-eps}
 			\lim_{\varepsilon\to 0} J_{\epsilon}(x, g)=\d_{\nu} g(x)\qquad\mbox{$d\nu(x)$--a.s.}
 		\end{equation}  	
Furthermore,  if $\ln^+ J$ is $\nu\times \mu$--integrable, then  $$
h^{\epsilon}_{\mu}(\nu):=\int_{[0, 1]} \int_{\MM} \ln (J_{\epsilon}(x,g ))\,  \d\nu(x)\, \d\mu(g)
$$ 
is well defined, $h^{\epsilon}_{\mu}(\nu)\in (-\infty, +\infty]$, and 
\begin{equation}\label{eq-conv-h}
\lim_{\varepsilon\to 0} h^{\epsilon}_{\mu}(\nu)=h_{\mu}(\nu).
\end{equation} 

 \end{lem}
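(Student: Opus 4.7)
The plan is to establish the pointwise convergence \eqref{eq-conv-J-eps} first and then deduce the integral convergence \eqref{eq-conv-h} from it via dominated and monotone convergence. For the pointwise statement, write
$$\hat J_\delta(x,g):=\sup_{I\in\JJ^x,\ |I|<\delta}\frac{\nu(g(I))}{\nu(I)}$$
and recall that, by Proposition~\ref{prop-RN-deri-NC-int}(1) applied locally on the monotonicity piece $I_i^g\ni x$ to the measure $\nu_i^g$, $\hat J_\delta(x,g)\to \d_\nu g(x)$ as $\delta\to 0$ for $\nu\times\mu$-a.e.\ $(x,g)$; here one uses that $\DD(g)$ is finite and $\nu$ is atomless to place $x$ in the interior of some $I_i^g$, and that $g$ restricted to $I_i^g$ is a homeomorphism so $\nu(g(I))=\nu_i^g(I)$ for small $I$. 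Since $\epsilon\mapsto J_\epsilon(x,g)$ is monotonically non-increasing as $\epsilon\downarrow 0$, the limit in \eqref{eq-conv-J-eps} exists; the task is to sandwich it between copies of $\hat J_\delta$.

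The \emph{lower bound} $\d_\nu g(x)\leq \lim_{\epsilon\to 0}J_\epsilon(x,g)$ is the easy half. Atomlessness of $\nu$ gives $\alpha(\delta):=\nu([x-\delta,x+\delta]\cap[0,1])\to 0$ as $\delta\to 0$, so every $I\in\JJ^x$ with $|I|<\delta$ satisfies $\nu(I)\leq\alpha(\delta)$; hence $\hat J_\delta(x,g)\leq J_{\alpha(\delta)}(x,g)$, and passing to the limit yields the claim.

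The \emph{upper bound} is the main obstacle, since ``small $\nu$-measure'' does not by itself force ``small length'': an interval $I\ni x$ might have tiny $\nu(I)$ yet extend far from $x$ through a zero-$\nu$-mass region. The remedy is the observation that for $\nu$-a.e.\ $x$ both one-sided quantities $\nu((x-r,x))$ and $\nu((x,x+r))$ are strictly positive for every $r>0$; the set of exceptions is contained in the collection of endpoints of connected components of $[0,1]\setminus\supp\nu$, which is countable and hence $\nu$-null by atomlessness. For such a ``two-sided accumulation'' point $x$, set $r_\epsilon^\pm:=\sup\{r\geq 0:\nu([x,x\pm r])\leq\epsilon\}$; both tend to $0$ as $\epsilon\to 0$. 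Any $I=[a,b]\in\JJ^x$ with $\nu(I)\leq\epsilon$ must satisfy $x-a\leq r_\epsilon^-$ and $b-x\leq r_\epsilon^+$, so $|I|\leq r_\epsilon^-+r_\epsilon^+\to 0$, giving $J_\epsilon(x,g)\leq \hat J_{r_\epsilon^-+r_\epsilon^+}(x,g)\to \d_\nu g(x)$ and closing \eqref{eq-conv-J-eps}.

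For \eqref{eq-conv-h}, note first that $J_\epsilon\leq J$ implies $\ln^+ J_\epsilon\leq \ln^+ J\in L^1(\nu\times\mu)$, so $h^\epsilon_\mu(\nu)$ is well-defined with values in $(-\infty,+\infty]$. Since $J_\epsilon\downarrow \d_\nu g$ pointwise, $(\ln J_\epsilon)^+$ decreases monotonically to $(\ln \d_\nu g)^+$ under the integrable dominator $\ln^+ J$ (dominated convergence), while $(\ln J_\epsilon)^-$ increases monotonically to $(\ln \d_\nu g)^-$ (monotone convergence, possibly attaining the value $+\infty$). Combining the two limits gives $h^\epsilon_\mu(\nu)\to h_\mu(\nu)$, completing the proof.
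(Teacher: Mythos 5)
Your proof is correct, and its overall architecture matches the paper's: sandwich $\lim_{\epsilon\to 0}J_\epsilon(x,g)$ between two quantities equal to $\d_\nu g(x)$ by converting between ``$\nu(I)$ small'' and ``$|I|$ small'', then split $\ln J_\epsilon$ into positive and negative parts and apply dominated convergence (with dominator $\ln^+J$) and monotone convergence respectively. The lower bound and the entire second half are essentially identical to the paper's argument. Where you genuinely diverge is the upper bound, i.e.\ the step showing that intervals of small $\nu$-measure around $x$ must also have small length. The paper handles this by invoking Proposition~\ref{prop-RN-deri-NC-int}(2) with $\lambda$ the Lebesgue measure to get that the maximal function $Q^*(x)=\sup_{I\in\JJ^x}|I|/\nu(I)$ is finite $\nu$-a.e., whence $|I|\le Q^*(x)\,\nu(I)\to 0$ along any minimizing sequence. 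You instead observe that $\nu$-a.e.\ $x$ is a two-sided accumulation point of $\supp\nu$ (the exceptions being the countably many endpoints of complementary intervals together with the $\nu$-null complement of the support), so that the one-sided radii $r_\epsilon^{\pm}$ tend to $0$ and every $I\in\JJ^x$ with $\nu(I)\le\epsilon$ has $|I|\le r_\epsilon^-+r_\epsilon^+$. Your route is more elementary at this point — it needs only atomlessness and the structure of $\supp\nu$, not the maximal inequality — while the paper's route reuses machinery ($Q^*$) that it needs again later in the proof of Proposition~\ref{contractivity}, so neither is wasteful in context. Both correctly reduce the pointwise limit to Proposition~\ref{prop-RN-deri-NC-int}(1) applied to the local measures $\nu_i^g$.
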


\begin{proof}  Denote $J_0:=\lim_{\varepsilon\to 0} J_{\epsilon}=\inf_{\varepsilon>0} J_{\epsilon}$. First observe that since $\nu$ has no atoms $\varepsilon(r,x):=\nu([x-r,x+r])\to 0$ as $r\to 0$. Thus
	$$
	\d_\nu g(x)=\lim_{r\to 0}\frac{\nu(g([x-r, x+r]))}{\nu([x-r, x+r])} \leq \lim_{r\to 0} J_{\varepsilon(r,x)}(x, g)=J_0(x,g).
	$$
	
To prove the reverse inequality observe that
$$Q^*(x):=\sup\left\{\frac{|I|}{\nu(I)} : \, I \text{ closed interval of [0,1] s.t. } x\in \mathrm{int}(I)  \right\}$$
coincide with $Q^*_\nu(\lambda,x)$ when $\lambda$ is the Lebesgue measure on $[0,1]$ and thus it is $\nu$--a.s. finite, by Proposition \ref{prop-RN-deri-NC-int} (2). 
Let $I_n^x$ be a sequence of intervals such that $J_0(x)=\lim_n \nu(g(I_n^x))/\nu(I_n^x)$  and such that $\nu(I_n^x)\to 0$. Then $\delta(x, n):=|I_n^x|\leq Q^*(x)\nu(I_n^x)\to 0$. Finally, since locally $g^{-1}\nu$ coincide with the measure $\nu_i^g:=(\gamma_i^{-1}\nu)\vert_{I_i^g}$ we can apply Proposition \ref{prop-RN-deri-NC-int} (1) to $\lambda=\nu_i^g$ and
$$
\begin{aligned}
J_0(x,g)&=\lim_{n\to \infty} \frac{\nu(g(I_n^x))}{\nu(I_n^x)}=\lim_{n\to \infty} \frac{\nu_i(I_n^x)}{\nu(I_n^x)}\\
&\leq \lim_{n\to \infty} \sup_{I\in\II_{\delta(x,n)}}\frac{\nu_i(I_n^x)}{\nu(I_n^x)}=\frac{d\nu_i}{d\nu}(x)=d_\nu g(x)
\end{aligned}
$$
This proves (\ref{eq-conv-J-eps}).

	To prove condition (\ref{eq-conv-h}) observe that $\ln^+J_{\epsilon}(x,g )$ converges $\nu\otimes\mu^{\otimes \mathbb N}$--a.s. to $\ln^+(d_{\nu}g(x))$, and since
	$\ln^+J_{\epsilon}(x,g )\le \ln^+J(x,g )$ and $\ln^+J(x,g )\in L^{1}([0, 1]\times\MM^{\mathbb N},\nu\otimes \mu^{\otimes \mathbb N})$ we have
	$$
	\lim_{\varepsilon\to 0}\int_{[0, 1]} \int_{\MM} \ln^+(J_{\epsilon}(x,g ))\,  \d\nu(x)\, \d\mu(g)=\int_{[0, 1]} \int_{\MM} \ln^+\,\d_{\nu} g(x)  \d\nu(x)\, \d\mu(g).
	$$
	On the other hand, since the sequence $\ln^-(J_{\epsilon}(x,g ))$ is decreasing as $\varepsilon\to 0$, by the Lebesgue theorem we also have
	$$
	\lim_{\varepsilon\to 0}\int_{[0, 1]} \int_{\MM} \ln^-(J_{\epsilon}(x,g ))\,  d\nu(x)\, \d\mu(g)=\int_{[0, 1]}\int_{\MM} \ln^-\,\d_{\nu} g(x)  \d\nu(x)\, \d\mu(g),
	$$
	and consequently (\ref{eq-conv-h}) holds.
\end{proof}
\begin{proof}[Proof of Proposition \ref{contractivity}] Consider the transformation $T:X\times \MM^{\mathbb{N} }\longrightarrow  X\times \MM^{\mathbb{N}}$  given by the formula
	$$
	T(x,\omega):=(g_1 (x), \sigma(\omega)),
	$$ 
	where $\sigma$ denotes the left shift on $\MM^{\mathbb N}$, i.e. $\sigma(g_1, g_2, \ldots)=(g_2, g_3, \ldots)$. 
	The transformation $T$ is $\nu\otimes \mu^{\mathbb{N}}$--measure preserving and ergodic for $\nu$ ergodic and $\mu$-invariant (see \cite{BQ}).  
	For any $\varepsilon>0$,  let $\tilde J_{\varepsilon}(x,\omega)= J_{\varepsilon}(x,g_1)$ and thus $ \tilde J_{\varepsilon}(T^k(x,\omega))=J_{\varepsilon}(X_k^x,g_{k+1})$.  
	
	Fix $\widetilde{h}\in (0, h_{\mu}(\nu))$ and choose $\varepsilon$ such that $\widetilde{h}<h^{\varepsilon}_{\mu}(\nu)$.
	
	By Birkhoff's Ergodic Theorem,
	$$\frac{\sum_{k=0}^{n-1}\ln (\tilde J_{\varepsilon}(T^k(x,\omega))}{n}
	\xrightarrow[n\to \infty]{}
	\int_{[0, 1]\times \MM} \ln (J_{\epsilon}(y,g))\, d(\nu\otimes \mu)(y,g) =-h_{\mu}^{\varepsilon}(\nu)
	$$ 
	for $\nu\otimes \mu^{\otimes \mathbb{N}}$--a.e. $(x,\omega)\in [0, 1]\times \MM^{\mathbb{N}}$.
	The above is still true even if $\ln\circ J_{\epsilon}\notin L^{1}([0, 1]\times \MM^{\mathbb{N}},\nu\otimes \mu^{\otimes \mathbb{N}})$, (thus $h_{\mu}^{\varepsilon}(\nu)=+\infty$) because we can consider the function $\max\{\ln\circ J_{\epsilon}, -M\}$ for arbitrary large $M>0$, which is in $L^{1}([0, 1]\times \MM^{\mathbb{N}},\nu\otimes \mu^{\otimes \mathbb{N}})$.

	 Then there exists $N\in\mathbb{N}$ such that 
	$$\widetilde{h}\leq -\frac{1}{n}\log\Big(\prod_{k=0}^{n-1}\tilde J_{\epsilon}(T^k(x,\omega))\Big)\quad\text{for all $n\geq N$}.
	$$ 
	This is equivalent to 
	$$ \prod_{k=0}^{n-1} J_{\epsilon}(X_k^x,g_{k+1})) =\prod_{k=0}^{n-1} \tilde J_{\epsilon}(T^k(x,\omega)) \leq e^{-n\widetilde{h}}  \quad\text{for all $n\geq N$}.
	$$
	Thus  for any $(x,\omega)$ be such that the Birkhoff limit holds there exists a constant $C=C(x,\omega)>0$ %depending on $(x,\omega)$  
	such that 
	\begin{equation}\label{eq: prod} 
	 \prod_{k=0}^{n-1} J_{\epsilon}(X_k^x,g_{k+1}))  \leq C(x,\omega) e^{-n\widetilde{h}}
		\quad\text{for all $n\in\mathbb N$}.
	\end{equation}
    
    Let $$
    \Omega_1:=\left\{ (x,\omega): \mbox{ (\ref{eq: prod}) holds and } x\not\in \bigcup_{n\in\N} \DD(g_n\cdots g_1) \right\}.
    $$
    Since $\nu$ is atomless and  $ \bigcup_{n\in\N} \DD(g_n\circ \cdots \circ g_1) $ is countable, then $\Omega_1$ has full measure.
      
	Take $(x,\omega)\in \Omega_1$.  By induction we show that: for any closed $I_0$   containing $x$ in its interior such that $\nu(I_0)<\delta(x,\omega):=\frac{\varepsilon}{1+C(x,\omega)}$ we have
	\begin{equation}\label{eq: nu contracting} \nu(g_n\circ \cdots \circ g_1 (I_0))\leq C e^{-n\cdot \widetilde{h}}\nu(I_0)\quad\text{for all $n\in\mathbb N$} .
	\end{equation}
	 For $n=0$ the above is trivial. %(Also for $n=1$ it is easy to see.)
	
	Assume it is true for $k=0,\ldots,n-1$. Let $I_k:=g_k\circ \ldots \circ g_1 (I_0)$ Hence, by induction hypothesis
	$$\nu(I_k)=\nu(g_k\circ \ldots \circ g_1 (I_0))\leq C e^{-n\cdot \widetilde{h}}\nu(I_0)\leq C e^{-n\cdot \widetilde{h}}\delta \leq \varepsilon$$
	since $e^{-n\cdot \widetilde{h}}\leq 1$ because  $\widetilde{h}>0$. 

Observe that $X_{k}^x=g_k\circ \ldots \circ g_1(x)\in I_k$ since $x\in I_0$.
Furthermore, the fact that the functions $g_i$ are piecewise monotone and  $x\not\in D(g_{k+1}\circ \ldots \circ g_1)$, ensures also that $X_{k}^x\in\mathrm{int}(I_k)$.
	Therefore,
	$$
	\begin{aligned}
		&\frac{\nu(g_{k+1}\circ \ldots \circ g_1 (I_0))}{\nu(g_k\circ \ldots \circ g_1 (I_0))}=\frac{\nu(g_{k+1}(I_k))}{\nu(I_k)}\leq\\
		&\leq 
		\sup\left\{\frac{\nu(g_{k+1}(I))}{\nu(I)}\ :\  I \text{ an interval with }X_k^x\in \mathrm{int}(I),\, \nu(I)\leq \varepsilon\right\}\\
		&=
		J_{\epsilon}(X_k^x, g_{k+1})=\tilde J_{\varepsilon}(T^k(x,\omega)).
	\end{aligned}
	$$ 
	Now, applying the above inequalities, we obtain
	\begin{equation}\label{e1_3.01.24}
		\begin{aligned}
			\nu(g_n\circ \ldots \circ g_1 (I_0)) &= 
			\nu(I_0)\prod_{k=0}^{n-1} \frac{\nu(g_{k+1}\circ \ldots \circ g_1 (I_0))}{\nu(g_k\circ \ldots \circ g_1 (I_0))}\\
			&\leq 
			\nu(I_0)\prod_{k=0}^{n-1} J_{\epsilon}(T^k(x,\omega))
			\leq
			C \nu(I_0) e^{-n\widetilde{h}},
		\end{aligned}
	\end{equation}
	where the last inequality follows by (\ref{eq: prod}) and the proof of (\ref{eq: nu contracting}) is complete.
	
	Let now
	$$
	Q^*(x):=\sup\left\{\frac{|I|}{\nu(I)}: \text{$I$ a closed interval such that $x\in\mathrm{int}(I)$}\right\}.
	$$
	By Proposition \ref{prop-RN-deri-NC-int} (2)
	$\ln^+ Q\in L^{1}([0, 1],\nu)$. Therefore, by the Birkhoff Ergodic Theorem we obtain that
	$\frac{1}{n}\sum_{k=0}^{n-1} \ln^+ Q(X_k^x)$ converges 
	to $\int_{[0, 1]} \ln^+ Q \d\nu$ for $\nu\otimes \mu^{\otimes \mathbb{N}}$--almost every $(x, \omega)$. Hence
	$\ln^+ Q^*(X_n^x)/n$ tends to $0$ for $\nu\otimes \mu^{\otimes \mathbb{N}}$--almost every $(x, \omega)$. 
On the other hand, as observed above, $X_n^x\in\mathrm{int}(I_n)$, if $x\not\in \DD(g_{n}\circ\cdots\circ g_1)$, thus for $\nu$--a.e. $x$ (since $\nu$ is atomless). Hence for every $\kappa>0$ we obtain
	$$
	\frac{|g_{n}\circ\cdots\circ g_1(I_0)|}{\nu(g_{n}\circ\cdots\circ g_1(I_0))}=\frac{|I_n|}{\nu(I_n)}\le Q^*(X_n^x)\le e^{n\kappa}
	$$
	for $\nu\otimes \mu^{\otimes \mathbb{N}}$--almost every $(x, \omega)$ for large enough $n$.
	Consequently, from (\ref{e1_3.01.24}) it follows that for every $n\in\mathbb{N}$ we have
	$$
	|g_{n+1}\circ g_{n}\circ\cdots\circ g_1(I_0)|\le \widetilde{C}  e^{-n(\widetilde{h}-\kappa)}\nu(I_0)
	$$ for some constant $\widetilde{C}>0$.
	Since $\widetilde{h}$ was an arbitrary constant from $(0, h_{\mu}(\nu))$ and $\kappa>0$ is arbitrary small, decreasing $I_0$ if necessary, we complete the proof.
\end{proof}

 \end{document}